\documentclass[letterpaper,10pt]{amsart}
\usepackage{indentfirst} 
\usepackage{amssymb}
\usepackage{amsmath}
\usepackage{mathabx} 
\usepackage{amsthm}
\usepackage{thmtools}
\usepackage{bbm}             
\usepackage[linktocpage=true]{hyperref}  
\usepackage{tikz}
\usepackage{enumerate}
\usepackage{enumitem}
\usepackage{dsfont}
\usepackage[all]{xy} 
\usepackage[numbers]{natbib}

\declaretheorem{theorem}

\declaretheorem{lemma}

\declaretheorem{proposition}

\declaretheorem{remark}
\declaretheorem{claim}

\declaretheorem[name=Theorem,numbered=no]{theorem*}

\newcommand{\R}{\mathbb{R}}
\newcommand{\Z}{\mathbb{Z}}
\newcommand{\N}{\mathbb{N}}

\DeclareMathOperator{\Span}{span}

\DeclareMathOperator{\SL}{SL}
\DeclareMathOperator{\GL}{GL}

\DeclareMathOperator{\Aut}{Aut}

\DeclareMathOperator{\rank}{rank}
\DeclareMathOperator{\Ad}{Ad}

\def\phi{\varphi}
\def\R{{\mathbb R}}

\def\N{{\mathbb N}}
\def\Z{{\mathbb Z}}

\def\L{{\Lambda}}

\def\Q{{\mathbb Q}}

\def\1{\mathds{1}}

\def\rest#1#2{\mathchoice
              {\setbox1\hbox{${\displaystyle #1}_{\scriptstyle #2}$}
              \restrictionaux{#1}{#2}}
              {\setbox1\hbox{${\textstyle #1}_{\scriptstyle #2}$}
              \restrictionaux{#1}{#2}}
              {\setbox1\hbox{${\scriptstyle #1}_{\scriptscriptstyle #2}$}
              \restrictionaux{#1}{#2}}
              {\setbox1\hbox{${\scriptscriptstyle #1}_{\scriptscriptstyle #2}$}
              \restrictionaux{#1}{#2}}}
\def\restrictionaux#1#2{{#1\,\smash{\vrule height .8\ht1 depth .85\dp1}}_{\,#2}}

\title{Infinite Approximate Subgroups of Soluble Lie Groups}
\author{Simon Machado \\
University of Cambridge}
\date{\today}

\begin{document}

\begin{abstract}
We study infinite approximate subgroups of soluble Lie groups. Generalising a theorem of Fried and Goldman we show that approximate subgroups are close, in a sense to be defined, to genuine connected subgroups. Building up on this result we prove a structure theorem for approximate lattices in soluble Lie groups. This extends to soluble Lie groups a theorem about quasi-crystals due to Yves Meyer.
\end{abstract}

\maketitle

\section{Introduction}
Approximate subgroups were defined by Terence Tao in \cite{MR2501249} in order to give a non-commutative generalisation of results from additive combinatorics. On the one hand, finite approximate subgroups have been extensively studied in particular by Ehud Hrushovski \cite{MR2833482} and by Emmanuel Breuillard, Ben Green and Terence Tao \cite{MR3090256}, leading to the structure theorem \cite{MR3090256}. This asserts that finite approximate subgroups are commensurable to coset nilprogressions, which are a certain non-commutative generalisation of arithmetic progressions. On the other hand, it seems hopeless to aim at classifying all infinite approximate subgroups. Some results in this direction for particular classes of infinite approximate subgroups can be found in \cite{MR2833482}, \cite{MR3438951} and \cite{MR3345797}. Inspired by Yves Meyer's results on quasi-crystals (\cite{meyer1972algebraic}),  Michael Bj\"orklund and Tobias Hartnick have defined a class of infinite approximate subgroups called approximate lattices in \cite{bjorklund2016approximate}. These approximate subgroups generalise lattices (discrete subgroups of Lie groups with finite co-volume) and share many properties with them. For instance, lattices and approximate lattices in nilpotent Lie groups have a very similar theory, see \cite{machado2018approximate}. Whether similar results hold for other types of locally compact groups is the open question that motivates this article. Here, we address the case of soluble Lie groups (Theorem \ref{Meyer's theorem soluble} below). Along the way, we show a structure theorem for all approximate subgroups in soluble algebraic groups (Theorem \ref{Main theorem}).  
\medbreak

A subset $\L$ of a group $G$ containing the identity is an \emph{approximate subgroup} if it is symmetric, i.e. $\L=\L^{-1}$, and if there exists a finite subset $F \subset G$ such that $\L^2 \subset F\L$. Here, $\L^2:=\{\lambda_1\lambda_2 | \lambda_1,\lambda_2 \in \L\}$, $F\L:=\{f\lambda |f \in F, \lambda \in \L \}$ and more generally $\L^n:=\{\lambda_1\cdots \lambda_n | \lambda_1,\ldots,\lambda_n \in \L \}$. Moreover, set $\L^{\infty} := \bigcup_{n \geq 0} \L^n$ the subgroup generated by $\L$. We will say that two subsets $\L, \Xi \subset G$ are \emph{commensurable} if there is a finite set $F$ such that $\L \subset F\Xi$ and $\Xi \subset F\L$. If $G$ is endowed with the structure of a topological group, we say that subsets $\L,\Xi \subset G$ are \emph{compactly commensurable} if there is a compact subset $K \subset G$ with $\L \subset K\Xi$ and $\Xi \subset K\L$. Commensurability and compact commensurability are equivalence relations.
 
 An approximate subgroup $\L \subset G$ in a locally compact group is a \emph{uniform approximate lattice} if it is discrete and compactly commensurable to $G$. The approximate group condition arises naturally from the combination of discreteness and compact commensurability to the ambient group: if a subset $\L \subset G$ is symmetric, compactly commensurable to $G$ and $\L^6$ is discrete, then $\L$ is a uniform approximate lattice. See \cite{bjorklund2016approximate} for this and more on the general theory of approximate lattices.
 
 Examples of uniform approximate lattices are given by \emph{cut-and-project schemes}. A cut-and-project scheme $(G,H,\Gamma)$ is the datum of two locally compact groups $G$ and $H$, and a uniform lattice $\Gamma$ in $G \times H$ such that $\Gamma\cap G = \{e\}$ and $\Gamma$ projects densely into $H$. Given a cut-and-project scheme $(G,H,\Gamma)$ and a symmetric relatively compact neighbourhood $W_0$ of $e_H$ in $H$, one gets a uniform approximate lattice when considering the projection $\L$ of $(G\times W_0 )\cap \Gamma$ to $G$. Any approximate subgroup of $G$ which is commensurable to such a $\L$ is called a \emph{Meyer subset} of $G$. This construction was first introduced by Yves Meyer in the abelian case \cite{meyer1972algebraic} and extended by Michael Bj\"orklund and Tobias Hartnick \cite{bjorklund2016approximate}. 
 
 In a similar fashion, for $\L\subset G$ a symmetric subset, we say that a group homomorphism $f: \L^{\infty}\rightarrow H$ with $H$ a locally compact group is a \emph{good model (for $\L$)} if:  $(i)$ $f(\L)$ is relatively compact, and $(ii)$ there is $V$ a neighbourhood of the identity in $H$ such that $f^{-1}(V)\subset \L$. In this situation, we say that $\L$ \emph{has a good model}.  In particular, note that if $\L$ has a good model, then $\L$ is an approximate subgroup.
 
 If $\L:=(G \times W_0) \cap \Gamma $ is a uniform approximate lattice constructed from a cut-and-project scheme $(G,H,\Gamma)$, then $f=p_H \circ (\rest{p_G}{\Gamma})^{-1} $ is a good model for $\L$, where $p_G$ and $p_H$ are the natural projections on $G$ and $H$ respectively. Conversely, if $\L\subset G$ is a uniform approximate lattice and has a good model $f$, the map 
 \begin{align*}
 \L^{\infty} & \rightarrow G\times \overline{f(G)} \\
 \gamma & \mapsto (\gamma, f(\gamma))
 \end{align*}
 embeds $\L^{\infty}$ in $G\times \overline{f(G)}$ as a uniform lattice. Thus, $(G,\overline{f(G)},\L^{\infty})$ is a cut-and-project scheme. Therefore, both constructions are equivalent and we will use the latter as it is handier in our case. For further results on good models in groups see \cite{machado2019goodmodels}.
 
 Now, we state our main results. The first theorem is concerned with general approximate subgroups in soluble algebraic groups. 
 
 \begin{theorem}\label{Main theorem}
  Let $ \L \subset \GL_n(\R)$ be an approximate subgroup generating a soluble subgroup. Then $\L$ is compactly commensurable to a Zariski-closed soluble subgroup of $\GL_n(\R)$ that is normalised by and contained in the Zariski-closure of an approximate subgroup commensurable to $\L$. 
 \end{theorem}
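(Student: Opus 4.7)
The plan is to proceed by induction on the dimension of the Zariski closure $H := \overline{\L^\infty}^Z$, a soluble real algebraic subgroup of $\GL_n(\R)$. First I would reduce to the case that $H$ is Zariski-connected: the identity component $H^\circ$ has finite index in $H$, and passing to a commensurable approximate subgroup contained in $H^\circ$ (together with a finite transversal for the remainder) lets us assume $\overline{\L^\infty}^Z = H^\circ$.

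For the inductive step I pick a nontrivial connected closed abelian normal algebraic subgroup $N \triangleleft H$ --- for instance the last nontrivial term of the derived series of $H$ --- and let $\pi : H \to H/N$ be the quotient. The image $\pi(\L)$ is an approximate subgroup generating the soluble algebraic group $H/N$ of strictly smaller dimension, so by induction it is compactly commensurable to a Zariski-closed subgroup $\bar S \subset H/N$ which is normalised by and contained in the Zariski closure of an approximate subgroup commensurable to $\pi(\L)$. After enlarging $\L$ to a commensurable approximate subgroup if needed, one restricts attention to $\L \cap \pi^{-1}(\bar S)$.

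The heart of the argument --- and the main obstacle --- is to combine the inductive output on $H/N$ with an analysis of how $\L$ meets $N$. After further commensurable enlargement, $\L \cap N$ should itself be an approximate subgroup of the connected abelian Lie group $N$, and Meyer-type arguments tailored to the abelian case would produce a closed connected subgroup $S_N \subset N$ compactly commensurable to $\L \cap N$. The desired Zariski-closed subgroup $S \subset H$ then arises as an extension of $\bar S$ by $S_N$ inside $\pi^{-1}(\bar S)$. The normalisation clause demands careful bookkeeping: one must exhibit a single commensurable approximate subgroup $\L'$ whose Zariski closure simultaneously contains and normalises both $S_N$ and the preimage of $\bar S$, which is achieved by gluing the two pieces through the short exact sequence $1 \to N \to H \to H/N \to 1$. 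Compact commensurability of $\L$ with $S$ then falls out of a routine computation in this extension; however, establishing that $\L \cap N$ is approximately structured rather than pathologically sparse is the truly delicate step, and is where I expect the main difficulty to lie --- likely requiring a Fried--Goldman-style displacement trick adapted to the approximate-subgroup setting, exploiting that commutators with elements of $\L$ stay inside powers of $\L$.
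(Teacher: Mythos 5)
Your overall architecture---induct, split off a normal subgroup, solve the quotient, recover the kernel direction---superficially resembles the paper's, but the inductive scheme you propose breaks at exactly the step you flag as delicate, and the break is not repairable by a displacement trick: the set $\L\cap N$ (or $\L^m\cap N$ for any fixed $m$) simply does not record the extent of $\L$ along $N$. Already in $G=\R^2$ with $N=\R\times\{0\}$ and $\L$ a Delone model set arising from a generic lattice $\Gamma\subset\R^2\times\R$, one has $\L^m\cap N=\{0\}$ for every $m$ (membership in $N$ is an exact linear condition which generic lattice vectors never satisfy), while the only closed subgroup compactly commensurable to $\L$ is all of $\R^2$. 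The same phenomenon persists in your inductive step for non-abelian $H$: knowing that $\pi(\L)$ is compactly commensurable to $\bar S$ and that $\L^2\cap N$ is compactly commensurable to $S_N$ neither determines nor bounds the position of $\L$ inside $\pi^{-1}(\bar S)$; there are continuum many closed subgroups that are extensions of $\bar S$ by $S_N$, and $\L$ need not lie within a compact set of any chosen one. Relatedly, ``restricting to $\L\cap\pi^{-1}(\bar S)$'' can destroy $\L$: compact commensurability of $\pi(\L)$ with $\bar S$ only gives $\pi(\L)\subset K\bar S$, not $\pi(\L)\subset\bar S$, so this intersection may reduce to $\{e\}$.

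The paper's proof is organised in the opposite direction precisely to avoid this. It inducts on the derived length and first descends \emph{into} the derived subgroup: since $[\L,\L]\subset\L^4\cap[\mathbb{G}(\R),\mathbb{G}(\R)]$ and $\L^{\infty}$ is Zariski-dense, the intersection $\L^k\cap[\mathbb{G}(\R),\mathbb{G}(\R)]$ is provably a large (Zariski-densely generating) approximate subgroup---the derived subgroup is the one normal subgroup whose intersection with powers of $\L$ is guaranteed to be substantial. The inductive hypothesis produces a connected subgroup $H_1$ there; a separate argument (via the upper central series and the logarithm) shows $H_1$ is normalised by $\L$, hence normal and algebraic. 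One then quotients by $H_1$, and the key new ingredient is a Schur-type statement for algebraic homomorphisms (a set with relatively compact image under an algebraic homomorphism lies in the kernel times a compact set), applied first to the adjoint action and then to commutator maps, which places the image of $\L$ inside a compact neighbourhood of the \emph{centre} of its Zariski closure. Only at that point is the abelian theorem of Schreiber invoked, inside the centre, and it is Schreiber's theorem that resolves the ``irrational positioning'' issue your gluing step cannot. To salvage a bottom-up induction you would have to replace $\L\cap N$ by something like the projection to $N$ of $\L^2\cap KN$ for a compact $K$, and controlling that projection is where the real work begins.
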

 
 Theorem \ref{Main theorem} is a non-commutative generalisation of a theorem due to Jean-Pierre Schreiber \cite[Proposition 2]{schreiber1973approximations}, which was recently given a new proof by Alexander Fish in \cite[Theorem 2.2]{fish2019extensions}. 
 Theorem \ref{Main theorem} also generalises a result of Fried and Goldman about the existence of \emph{syndetic hulls} for virtually solvable subgroups of $\GL_n(\R)$ (see \cite[Theorem 1.6]{MR689763} and Proposition \ref{proposition Fired et Goldman} below). Another interesting corollary to this result is that \emph{strong approximate lattices} (see \cite[Definition $4.9$]{bjorklund2016approximate}) in soluble algebraic groups are uniform (see Theorem \ref{strong approximate lattices in soluble groups}).

 \medbreak
 
 In \cite{meyer1972algebraic} Yves Meyer proved a structure theorem for what later came to be known as mathematical quasi-crystals. Quasi-crystals correspond to uniform approximate lattices in locally compact abelian groups. Rephrased with our terminology, Meyer's theorem becomes :
 
 \begin{theorem*}[Theorem 3.2,\cite{meyer1972algebraic}]
  Let $\L$ be a uniform approximate lattice in a locally compact abelian group $G$. Then $\L$ is a Meyer subset. 
 \end{theorem*}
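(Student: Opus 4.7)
The plan is to exploit the equivalence, established in the excerpt, between being a Meyer subset (of a uniform approximate lattice) and admitting a good model. So the task reduces to constructing, for any uniform approximate lattice $\Lambda$ in a locally compact abelian group $G$, a group homomorphism $f:\Lambda^{\infty}\to H$ to a locally compact group $H$ with $f(\Lambda)$ relatively compact and with some neighbourhood $V$ of $e_H$ satisfying $f^{-1}(V)\subset\Lambda$.

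First I would reduce to the case where $G$ is compactly generated: since $\Lambda$ is compactly commensurable to $G$, the group generated by $\Lambda^{\infty}$ together with a compact set realising this commensurability is a compactly generated open subgroup of $G$, and we may replace $G$ by this subgroup without affecting the statement. By the structure theorem for compactly generated locally compact abelian groups, we may then assume $G\cong \R^a\times\Z^b\times K$ with $K$ compact abelian, which allows the use of Fourier analysis on $G$ and its dual $\widehat G$.

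The core of the proof is harmonic-analytic. Fix a F\o lner sequence $(B_n)$ in $G$ and form the autocorrelation measure $\gamma=\lim_n |B_n|^{-1}\sum_{\lambda,\mu\in\Lambda\cap B_n}\delta_{\lambda-\mu}$ (passing to a subsequence for convergence). Since $\gamma$ is positive definite, its Fourier transform $\widehat\gamma$ is a positive Radon measure on $\widehat G$ admitting a Lebesgue decomposition into pure-point, singular-continuous and absolutely continuous parts. Let $S\subset\widehat G$ be the support of the pure-point part. The crucial step is to prove that $S$ is relatively dense in $\widehat G$: this is where the approximate-subgroup hypothesis $\Lambda^2\subset F\Lambda$ enters, by translating the algebraic doubling bound into a ``rich set of Bragg peaks'' statement via a Wiener-type averaging argument on the finite translates by $F$. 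Equivalently, and perhaps more cleanly, one can frame this via the hull dynamical system $\Omega_{\Lambda}=\overline{\{\Lambda-g:g\in G\}}$ in the Chabauty--Fell topology, and argue that its maximal equicontinuous factor is a nontrivial compact abelian group $H$ with a continuous $G$-equivariant surjection $\Omega_{\Lambda}\to H$.

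Given $S$ relatively dense in $\widehat G$, let $H$ be the Pontryagin dual of $\langle S\rangle$ equipped with the discrete topology, and define $f:\Lambda^{\infty}\to H$ by $f(\lambda)(\chi)=\chi(\lambda)$. That $f$ is a homomorphism is immediate; that $f(\Lambda)$ is relatively compact follows from a uniform amplitude bound on the Bragg peaks derived from the relative density of $\Lambda$; and the existence of a neighbourhood $V$ of $e_H$ with $f^{-1}(V)\subset\Lambda$ follows by combining the relative density of $S$ in $\widehat G$ with the discreteness of $\Lambda$ in $G$, after showing that sufficiently many characters in $S$ separate $\Lambda$ from points of $\Lambda^{\infty}\setminus\Lambda$. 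The main obstacle is the harmonic-analytic step: relative density of the pure-point spectrum. In $\R^n$ this is Meyer's original computation; transporting it to a general locally compact abelian group requires careful bookkeeping of F\o lner averages and a Plancherel-type argument adapted to the decomposition $G\cong\R^a\times\Z^b\times K$, and is the step where the argument stops being formal manipulation of the good-model definition.
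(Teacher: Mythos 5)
First, a point of order: the paper does not prove this statement --- it is quoted from \cite{meyer1972algebraic} as motivation, and the paper's own contribution starts with the soluble generalisation. So there is no internal proof to compare against; I can only judge your argument on its own terms and against Meyer's.

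The decisive gap is in your final construction. You take $H$ to be the Pontryagin dual of $\langle S\rangle$ \emph{with the discrete topology}; the dual of a discrete group is compact, so your internal space $H$ is compact. A good model $f:\Lambda^{\infty}\to H$ with $H$ compact cannot exist unless $\Lambda^{\infty}$ is already uniformly discrete: cover $\overline{f(\Lambda^{\infty})}$ by finitely many translates $f(\gamma_1)V,\dots,f(\gamma_N)V$ with $\gamma_i\in\Lambda^{\infty}$; then $f(\gamma_i^{-1}x)\in V$ for each $x\in\Lambda^{\infty}$ and suitable $i$, so $\Lambda^{\infty}\subset\{\gamma_1,\dots,\gamma_N\}\Lambda$, forcing $\Lambda^{\infty}$ to be a genuine uniform lattice in $G$. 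For the standard non-lattice example $\Lambda=\{a+b\sqrt2 : a,b\in\Z,\ |a-b\sqrt2|\le 1\}\subset\R$ one has $\Lambda^{\infty}=\Z[\sqrt2]$ dense in $\R$, and the correct internal space is the \emph{non-compact} group $\R$ via $a+b\sqrt2\mapsto a-b\sqrt2$. So this is not a bookkeeping issue: the object you build is the wrong one, and the clause ``$f(\Lambda)$ relatively compact follows from an amplitude bound'' is vacuous for compact $H$.

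The harmonic-analytic core is also at risk of circularity. That the pure-point diffraction of a Meyer set has relatively dense support is a real theorem (Strungaru), but its known proofs pass through the relative density of the sets of $\epsilon$-dual characters $\{\chi : |\chi(\lambda)-1|<\epsilon\ \text{for all}\ \lambda\in\Lambda\}$, which is essentially Meyer's harmonious-set theorem, i.e.\ essentially the statement to be proved; I know of no ``Wiener-type averaging over $F$'' that extracts relatively dense Bragg peaks directly from $\Lambda^2\subset F\Lambda$. Moreover condition $(ii)$ of a good model requires amplitude control ($\chi$ uniformly close to $1$ on $\Lambda$), not merely atomicity of $\widehat\gamma$ at $\chi$, and your proposal waves this through. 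The workable routes are either Meyer's original one (harmonious sets and $\epsilon$-dual characters, with $H$ built from a finitely generated group containing $\Lambda$), or the one consistent with this paper's framework: use Schreiber's theorem (Theorem \ref{Approximate subgroup in vector spaces}, or Proposition \ref{Approximate subgroups in soluble algebraic Groups} in the abelian case) to find a closed connected subgroup compactly commensurable with $\Lambda$, and take the good model to be the projection modulo that subgroup, exactly as in the proof of Proposition \ref{polycyclic implies Meyer}.
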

 
 Motivated by this result the authors of \cite{bjorklund2016approximate} asked whether similar results would hold for other classes of locally compact groups \cite[Problem $1.$]{bjorklund2016approximate}. We answer this question in the soluble Lie case. This improves, using completely different methods, a previous article by the author that dealt with uniform approximate lattices in nilpotent Lie groups \cite{machado2018approximate}. 
 
%
%

 \begin{theorem}\label{Meyer's theorem soluble}
 Let $\Lambda \subset G$ be a uniform approximate lattice in a connected soluble Lie group. Then $\Lambda$ is a Meyer subset.
 \end{theorem}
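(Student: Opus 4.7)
The plan is to deduce Theorem~\ref{Meyer's theorem soluble} from Theorem~\ref{Main theorem} by constructing a good model for $\Lambda$; by the cut-and-project/good-model equivalence recalled above, such a good model immediately realises $\Lambda$ as a Meyer subset. To apply Theorem~\ref{Main theorem} I must first linearise: a connected soluble Lie group $G$ is not always embeddable in $\GL_n(\R)$, but after quotienting by a maximal compact central subgroup $T$ it is. I would therefore deal with the compact factor $T$ separately (it costs nothing in the good-model conditions, since its contribution is automatically relatively compact) and reduce to the case where $G$ is a closed subgroup of some $\GL_n(\R)$.

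In this linear situation Theorem~\ref{Main theorem} produces, for an approximate subgroup $\Xi$ commensurable with $\Lambda$, a Zariski-closed soluble subgroup $H \subset \GL_n(\R)$ with $\Lambda$ compactly commensurable to $H$, normalised by $\Xi^\infty$, and contained in $\overline{\Xi}^Z$. Replacing $\Lambda$ by $\Xi$ (commensurability preserves the Meyer property), I may assume $\Lambda^\infty$ itself normalises $H$. Compact commensurability with the uniform approximate lattice $\Lambda$ forces $H \cap G$ to be closed and cocompact in $G$: this is the soluble analogue of the Fried--Goldman syndetic hull and serves as the geometric backbone for the good model.

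I would construct the good model $f: \Lambda^\infty \to K$ by combining two pieces of data. The first is the conjugation action $\rho: \Lambda^\infty \to \Aut(H)$, well-defined because $\Lambda^\infty$ normalises $H$; the closure of its image inside the Lie group $\Aut(H)$ supplies the ``external'' coordinate of $K$. The second is the restriction $\Lambda \cap H$, which is a uniform approximate lattice in the closed cocompact subgroup $H \cap G$; inductively on the derived length of $G$ (with Meyer's abelian theorem and \cite{machado2018approximate} in the nilpotent case as base cases) this restriction admits its own good model, supplying the ``internal'' coordinate of $K$. Splicing the two via an extension of locally compact groups produces $K$ and $f$; relative compactness of $f(\Lambda)$ follows from $\Lambda \subset K_0 H$ for some compact $K_0 \subset G$ together with the continuity of conjugation on compact sets.

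The main obstacle will be verifying the second good-model axiom, $f^{-1}(V) \subset \Lambda$ for some neighbourhood $V$ of the identity in $K$. This local-to-global step requires that elements of $\Lambda^\infty$ whose $f$-image is small genuinely lie in $\Lambda$, despite $\Lambda$ being only compactly (not topologically) commensurable to $H$. Handling this uses the full strength of Theorem~\ref{Main theorem}: the containment $H \subset \overline{\Xi}^Z$ rules out pathological centralisers of $H$ inside $\Lambda^\infty$ that would otherwise spoil the splicing, and the algebraic-geometric rigidity of the Zariski closure is precisely what lets one transfer the good-model property from $\Lambda \cap H$ back to $\Lambda$ itself and close the induction.
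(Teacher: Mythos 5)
Your proposal correctly identifies the target (build a good model and invoke Theorem \ref{Main theorem}), but the route you sketch has two gaps that the paper's actual argument is specifically designed to avoid. First, the linearisation step: it is not true in general that a connected soluble Lie group becomes linear after quotienting by a maximal compact central subgroup, and the paper never linearises $G$ at all. Instead it works with the abstract group $\L^{\infty}$: Proposition \ref{Generated group is polycyclic} (the bulk of the work, using the nilradical, a rank-minimisation among approximate subgroups commensurable to $\L^2\cap N$, and the $p$-adic compactness argument of Proposition \ref{Approximate sugroup commensurating a lattice}) replaces $\L$ by a commensurable $\L'$ with $(\L')^{\infty}$ polycyclic, and then Auslander's theorem embeds $(\L')^{\infty}$ as a \emph{discrete Zariski-dense lattice} in the $\R$-points $R$ of an auxiliary soluble algebraic group, unrelated to $G$. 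Neither of these steps appears in your proposal, and without them Theorem \ref{Main theorem} cannot be brought to bear on $\L^{\infty}$ in a setting where discreteness is available.

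Second, and decisively, the step you yourself flag as ``the main obstacle'' --- axiom $(ii)$ of a good model, i.e. upgrading compact commensurability to genuine commensurability --- is exactly what the Auslander embedding resolves and what your sketch leaves open. In the paper, the good model is simply the quotient map $p:R\to R/N$, where $N$ is the normal connected subgroup from Proposition \ref{Approximate subgroups in soluble algebraic Groups}; because $\L^{\infty}$ is discrete in $R$, the compact set $K$ witnessing compact commensurability meets $\L^{\infty}$ in a finite set, so $\L$ is commensurable (not merely compactly commensurable) to $\L^{\infty}\cap p^{-1}(W_0)$, which gives both good-model axioms at once. Your appeal to ``the containment $H\subset\overline{\Xi}^Z$'' and ``algebraic-geometric rigidity'' is not a substitute for this mechanism. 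Moreover, your proposed induction on derived length applied to $\L\cap H$ is circular: Theorem \ref{Main theorem} produces an $H$ compactly commensurable to $\L$, hence to $G$, so $H\cap G$ is cocompact with the same derived length and the sub-problem for the ``internal coordinate'' is the original problem again. The quasi-periodicity of $\L$ lives transverse to $G$ in the cut-and-project picture, which is why the internal space must be manufactured from an external embedding of $\L^{\infty}$ rather than found inside $G$.
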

 
 Let us now give a brief overview of the proof strategy for Theorems \ref{Main theorem} and \ref{Meyer's theorem soluble}. 
 
  Theorem \ref{Main theorem} will be proved by induction on the derived length. We use induction to reduce the proof to the case where $\left\{\lambda_1\lambda_2\lambda_1^{-1}\lambda_2^{-1} | \lambda_1,\lambda_2 \in \L \right\}$ is relatively compact. Then we are able to show that $\L$ is close to the centre of $\mathbb{G}\left(\R\right)$. The crux of the proof relies on the following fact that is specific to algebraic group homomorphisms: if $\phi$ is a algebraic group homomorphism and $S$ is a set that has relatively compact image by $\phi$, then $S$ is contained in $\ker(\phi)K$ for some compact subset $K$. Applied to inner automorphisms, this yields a result reminiscent of a classical theorem of Schur, according to which a group with a finite set of commutators has a finite-index centre. Finally, we conclude using ideas developed by Alexander Fish in his new proof of the abelian case \cite{fish2019extensions}.
  
 In order to prove Theorem \ref{Meyer's theorem soluble}, we first show that, although $\L^{\infty}$ is \emph{a priori} only a soluble group, $\L$ is commensurable to a uniform approximate lattice $\L'$ that generates a polycyclic group. Using Auslander's embedding theorem on polycyclic groups we embed $\left(\L'\right)^{\infty}$ as a lattice in some soluble algebraic group. Then $\L'$ is a Meyer subset according to Theorem \ref{Main theorem}. 
 \section{Technical Results about Commensurability }
 
 In this section, we prove two technical results that will turn out to be particularly helpful. These are well-known results in the theory of finite approximate subgroups, but their proofs do not use the finiteness assumption.  
 
 The first result is about the intersection of commensurable approximate subgroups. 
  
 \begin{lemma}
 \label{Intersection commensurable approximate subgroups}
 Let $\L,\Xi \subset G$ be approximate subgroups such that $\L \subset F\Xi$ for some finite set $F$. Then $\L^2 \cap \Xi^2$ is commensurable to $\L$. Moreover, there is $F' \subset \L$ with $|F'|=|F|$ such that $A \subset F'(\L^2 \cap \Xi^2)$. 
 \end{lemma}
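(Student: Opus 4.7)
The plan is to use the classical pigeonhole / coset-representative trick that lies behind so many commensurability arguments for approximate groups, and which (as the authors note) does not require finiteness.

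Write $F = \{f_1, \dots, f_k\}$ and consider the covering $\Lambda \subset \bigcup_{i=1}^{k} f_i \Xi$. For each index $i$ such that $f_i \Xi \cap \Lambda$ is non-empty, pick a representative $\lambda_i \in f_i \Xi \cap \Lambda$, and let $F' \subset \Lambda$ be the set of these representatives, so $|F'| \le |F|$. Now for any $\lambda \in \Lambda$ there is some $i$ with $\lambda \in f_i \Xi$, hence $\lambda_i^{-1}\lambda \in \Xi^{-1} f_i^{-1} f_i \Xi = \Xi^{-1}\Xi = \Xi^2$, using symmetry of $\Xi$. On the other hand, $\lambda_i^{-1}\lambda \in \Lambda^{-1}\Lambda = \Lambda^2$, using symmetry of $\Lambda$. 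Therefore $\lambda = \lambda_i \cdot (\lambda_i^{-1}\lambda) \in F'(\Lambda^2 \cap \Xi^2)$, giving the moreover part.

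From this inclusion, commensurability follows immediately: one direction is $\Lambda \subset F'(\Lambda^2 \cap \Xi^2)$ with $F'$ finite, and for the other direction we use the approximate-subgroup property of $\Lambda$ itself, namely the existence of a finite $F''$ with $\Lambda^2 \subset F''\Lambda$, so that $\Lambda^2 \cap \Xi^2 \subset \Lambda^2 \subset F''\Lambda$. Hence $\Lambda$ and $\Lambda^2 \cap \Xi^2$ are commensurable.

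There is no real obstacle here: the only point to watch is that the pigeonhole step is written in a way that makes clear the representatives can be chosen inside $\Lambda$ (not merely inside $G$), since the conclusion asserts $F' \subset \Lambda$. This is precisely what the choice $\lambda_i \in f_i \Xi \cap \Lambda$ achieves, and it is exactly the reason why one keeps the same cardinality bound $|F'| = |F|$ (or at most $|F|$; one can pad $F'$ by arbitrary elements of $\Lambda$ to match the cardinality if equality is desired).
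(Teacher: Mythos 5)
Your proof is correct and is essentially the same argument as the paper's: pick a representative $\lambda_i \in \L \cap f_i\Xi$ for each nonempty piece of the cover and observe that $\lambda_i^{-1}\lambda \in \L^2 \cap \Xi^2$, the reverse commensurability inclusion being immediate from $\L^2 \cap \Xi^2 \subset \L^2$. Your closing remark about choosing representatives inside $\L$ (and padding to get $|F'|=|F|$) is exactly the point the paper's choice $x_f \in \L \cap f\Xi$ handles.
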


\begin{proof}
 Let $F$ be a finite set such that $\L \subset F\Xi$ and for all $f \in F$ pick $x_f \in \L \cap f\Xi$. Then for any $x \in \L \cap f\Xi$ we have $x_f^{-1}x \in \L^2 \cap \Xi^2$ so $\L \subset \bigcup x_f \L^2 \cap \Xi^2 \subset \L^3$. 
\end{proof}

In particular, we can see that when $\L$ and $\Xi$ are genuine subgroups, commensurability as defined here is equivalent to commensurability of subgroups.

In the same line of ideas, we have the following lemma about intersections of general approximate subgroups. 

\begin{lemma}\label{Intersection of approximate subgroups}
 Let $\L,\Xi \subset G$ be approximate subgroups. Then $\L^2 \cap \Xi^2$ is an approximate subgroup. Moreover, $(\L^k \cap \Xi^k)_{k \geq 2}$ is a family of pairwise commensurable approximate subgroups. 
\end{lemma}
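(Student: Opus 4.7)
The plan is to use a Ruzsa-style covering argument exactly in the spirit of the previous lemma. Symmetry and containment of the identity are immediate for every $\L^k \cap \Xi^k$, so the whole content is the existence of a finite covering set, and we can in fact kill two birds with one stone: the same covering trick will produce both the approximate-subgroup property of $\L^2 \cap \Xi^2$ and the commensurability of all $\L^k \cap \Xi^k$ for $k \ge 2$.

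First I would handle $\L^2 \cap \Xi^2$. Since $(\L^2 \cap \Xi^2)^2 \subset \L^4 \cap \Xi^4$, it suffices to show the latter is covered by finitely many translates of $\L^2 \cap \Xi^2$. Because $\L$ and $\Xi$ are approximate subgroups, iterating the defining inclusion gives finite sets $E_\L$ and $E_\Xi$ with $\L^4 \subset E_\L \L$ and $\Xi^4 \subset E_\Xi \Xi$. For every pair $(a,b) \in E_\L \times E_\Xi$ such that $a\L \cap b\Xi \neq \emptyset$, pick an element $y_{a,b}$ in that intersection; let $F$ be the (finite) collection of all $y_{a,b}$. Now for any $x \in \L^4 \cap \Xi^4$ choose $(a,b)$ with $x = a\lambda = b\xi$ and write $y_{a,b} = a\mu = b\eta$; then $y_{a,b}^{-1}x = \mu^{-1}\lambda \in \L^2$ and simultaneously $y_{a,b}^{-1}x = \eta^{-1}\xi \in \Xi^2$, so $y_{a,b}^{-1}x \in \L^2 \cap \Xi^2$. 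This gives $\L^4 \cap \Xi^4 \subset F(\L^2 \cap \Xi^2)$, and hence $\L^2 \cap \Xi^2$ is an approximate subgroup.

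For the pairwise commensurability, $\L^2 \cap \Xi^2 \subset \L^k \cap \Xi^k$ is trivial for $k \geq 2$, so the only job is to cover $\L^k \cap \Xi^k$ by finitely many translates of $\L^2 \cap \Xi^2$. The inclusion $\L^k \cap \Xi^k \subset \L^{2k} \cap \Xi^{2k}$ reduces this to covering $\L^{2k} \cap \Xi^{2k}$. Replay exactly the argument above with $\L^{2k} \subset E_\L^{(k)} \L$ and $\Xi^{2k} \subset E_\Xi^{(k)} \Xi$ (again finite, obtained by iterating the approximate-subgroup inclusion), choosing representatives $y_{a,b}$ in each nonempty $a\L \cap b\Xi$: the product $y_{a,b}^{-1}x$ lies in $\L^2 \cap \Xi^2$ for the same reason as before. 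Thus $\L^{2k} \cap \Xi^{2k} \subset F^{(k)}(\L^2 \cap \Xi^2)$, and the chain of inclusions $\L^2 \cap \Xi^2 \subset \L^k \cap \Xi^k \subset F^{(k)}(\L^2 \cap \Xi^2)$ shows all $\L^k \cap \Xi^k$ are commensurable; commensurability of one approximate subgroup with another automatically makes the latter an approximate subgroup too, which completes the lemma.

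The only subtle point — and the reason one chooses this precise covering — is the balance between the exponents. Covering $\L^4$ by translates of $\L^2$ (rather than of $\L$) would only produce $y^{-1}x \in \L^4$ and leave the argument circular. Covering by translates of $\L$ itself is what turns the inverse-and-product $\mu^{-1}\lambda$ into an element of $\L^2$, matching precisely the target set $\L^2 \cap \Xi^2$. Once this observation is made the proof is short and entirely elementary, mirroring the calculation used in Lemma \ref{Intersection commensurable approximate subgroups}.
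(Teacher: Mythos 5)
Your proof is correct and follows essentially the same route as the paper: a Ruzsa-type covering argument that picks a representative in each nonempty intersection $a\L \cap b\Xi$ and uses symmetry to land $y_{a,b}^{-1}x$ in $\L^2 \cap \Xi^2$. The only cosmetic difference is that you cover $\L^{2k}\cap\Xi^{2k}$ directly for each $k$, whereas the paper iterates the $k=2$ case along powers of two; both yield the same conclusion.
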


\begin{proof}
Let $F_1,F_2 \subset \Gamma$ be finite subsets such that $\Lambda^2 \subset F_1\Lambda$ and $\Xi^2 \subset F_2\Xi$. Define 
$$ T:=\{ (t_1,t_2) \in F_1^3\times F_2^3|t_1\Lambda \cap t_2\Xi \neq \emptyset \}.$$

For all $t=(t_1,t_2) \in T$ pick $x_t \in t_1\Lambda \cap t_2\Xi$. Then for all $x \in t_1\Lambda\cap t_2\Xi$ we have $x_t^{-1}x \in \Lambda^2 \cap \Xi^2$, hence $x \in x_t(\Lambda^2 \cap \Xi^2)$. So,
$$ t_1\Lambda \cap t_2\Xi \subset x_t(\Lambda^2 \cap \Xi^2).$$
Therefore, 
\begin{align*}
 (\Lambda^2\cap \Xi^2)^2 \subset \Lambda^4\cap \Xi^4 & \subset F_1^3\Lambda\cap F_2^3\Xi \\
						 & \subset \bigcup\limits_{t\in T}t_1\Lambda \cap t_2\Xi \\
						 & \subset \bigcup\limits_{t\in T}x_t(\Lambda^2\cap \Xi^2).
\end{align*}
So $\Lambda^2\cap \Xi^2$ is an approximate subgroup. Moreover, the previous inclusions show that 
$$\Lambda^4\cap \Xi^4 \subset  \bigcup\limits_{t\in T}x_t(\Lambda^2\cap \Xi^2). $$
So $\Lambda^4\cap \Xi^4$ and $\Lambda^2\cap \Xi^2$ are commensurable and by induction $\Lambda^{2^n}\cap \Xi^{2^n}$ is commensurable to $\Lambda^2\cap \Xi^2$.
 
\end{proof}

In particular, we will often use Lemma \ref{Intersection of approximate subgroups} with $\Xi$ a subgroup. Then $(\L^k \cap \Xi)_{k \geq 2}$ is a family of pairwise commensurable approximate subgroups. 

\section{Approximate subgroups in soluble linear groups}

In this section we prove Theorem \ref{Main theorem}. 
Let us first consider approximate subgroups in vector spaces. 

\begin{theorem}[Proposition 2,\cite{schreiber1973approximations}]\label{Approximate subgroup in vector spaces}
 Let $V$ be a real vector space and $\Lambda\subset V$ an approximate subgroup. There exists a vector subspace $W\subset V$ compactly commensurable to $\L$. 
\end{theorem}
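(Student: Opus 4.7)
I would proceed by convex geometry. The plan is to consider the closed convex hull $C := \overline{\conv(\L)} \subset V$ and to identify $W$ with its recession cone. After reducing to the case in which $V = \Span_{\R}(\L)$ is finite dimensional (a step one can justify by a Pl\"unnecke--Ruzsa-style argument using only the approximate-group inequality), the set $C$ is closed, convex, symmetric (since $\L = -\L$), and contains $0$. Taking convex hulls and closures in $\L + \L \subset F + \L$, and using the identity $\conv(A + B) = \conv(A) + \conv(B)$, one obtains
\[
2C \;=\; C + C \;\subset\; \conv(F) + C,
\]
so $C$ satisfies the uniform doubling inequality $2C \subset K + C$ with the compact convex set $K := \conv(F)$.

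I would then set $W := \{v \in V : v + C \subset C\}$, the recession cone of $C$. Since $C = -C$, the set $W$ is stable under negation, so $W$ is a linear subspace of $V$. A classical finite-dimensional convex-geometry theorem (going back to Klee) then gives the tube decomposition $C = K_0 + W$ for some compact convex $K_0 \subset V$. This yields one half of compact commensurability immediately: $\L \subset C = K_0 + W$.

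The harder half is the reverse inclusion $W \subset K' + \L$. A first step is to iterate the doubling relation $2C \subset K + C$, which gives $C \subset (1 - 2^{-k}) K + 2^{-k} C$ for each $k \geq 1$; letting $k \to \infty$ and using that $2^{-k} C = 2^{-k} K_0 + W$ converges to $W$ on bounded sets, one sees that $C$ itself is contained in a fixed compact set plus $W$. Upgrading this convex-hull statement to a statement about $\L$ itself is delicate: I would invoke a Bogoliubov-type argument, combining the approximate-group property with Pl\"unnecke--Ruzsa-style estimates and the tube decomposition, to show that $\L - \L$ is thick in $W$, i.e.\ $W \subset K_1 + (\L - \L)$ for some compact $K_1$. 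Since $\L$ is symmetric, $\L - \L = \L + \L \subset F + \L$, so $W \subset (K_1 + F) + \L$ as required.

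The main obstacle is precisely this last passage, from a convex-geometric density statement about $C$ to a genuine set-theoretic density of $\L$ itself in $W$: convex combinations of elements of $\L$ do not by themselves furnish single elements of $\L$ close to a prescribed point. One robust way to overcome it is by induction on $\dim V$. Projecting to $V/W$, the image $\pi(\L)$ is automatically a bounded approximate subgroup, so the induction reduces everything to the base case where the recession cone coincides with $V$. In that base case one then argues directly that any approximate subgroup of $\R^n$ whose closed convex hull equals $\R^n$ must be cocompact, by a discretization exploiting $2\L \subset F + \L$ together with the finite multiplicative structure of $F$.
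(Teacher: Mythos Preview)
The paper does not prove this statement at all: it is quoted from Schreiber and used as a black box, and the introduction notes that Fish has recently given an alternative proof. So there is no proof in the paper to compare yours against.

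On your proposal itself: the overall strategy --- pass to $C=\overline{\conv(\Lambda)}$, identify $W$ as its recession cone, and invoke the tube decomposition $C=K_0+W$ --- is sound and is close in spirit to Fish's geometric argument. The inclusion $\Lambda\subset K_0+W$ is indeed immediate from Klee's theorem.

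The reverse inclusion $W\subset K'+\Lambda$, however, is not established by anything you wrote. Your iteration of $2C\subset K+C$ only yields information about $C$, which you already decomposed; it gives nothing new about $\Lambda$ itself. The induction you sketch does not close the gap either: projecting to $V/W$ and observing that $\pi(\Lambda)$ is bounded simply re-derives $\Lambda\subset K_0+W$, the easy direction. To make an inductive scheme work for the hard direction you would need to produce an approximate subgroup inside $W$ (for instance $\Lambda^2\cap W$, or the intersection of $\Lambda$ with a slab around $W$) and argue that \emph{its} convex hull still has recession cone all of $W$; this is a genuine step and you have not supplied it. Finally, your base case --- ``an approximate subgroup whose closed convex hull is all of $\R^n$ must be cocompact'' --- is exactly the heart of the matter, and ``a discretization exploiting $2\Lambda\subset F+\Lambda$ together with the finite multiplicative structure of $F$'' is a direction, not an argument. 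Note for example that $\Lambda^\infty\subset\langle F\rangle+\Lambda$ with $\langle F\rangle$ finitely generated but \emph{not} compact, so the na\"{i}ve route does not give relative density. In short: you have correctly identified $W$ and proved one half of compact commensurability, but the other half remains open in your write-up.
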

 
 In our proof of Proposition \ref{Approximate subgroups in soluble algebraic Groups} we will need a slightly different result that is an easy consequence of Theorem \ref{Approximate subgroup in vector spaces}. 
 
 \begin{lemma}\label{lemma approximate subgroups in vector spaces}
 Let $V$ be a real vector space and $\Lambda\subset V$ a symmetric subset such that $\L+\L$ is compactly commensurable to $\L$. There exists a vector subspace $W\subset V$ compactly commensurable to $\L$.
\end{lemma}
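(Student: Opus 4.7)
The plan is to deduce the lemma from Theorem \ref{Approximate subgroup in vector spaces} by first upgrading $\Lambda$ to an honest approximate subgroup via a thickening trick, while preserving the compact commensurability class.

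First, without loss of generality I will assume $0 \in \Lambda$ (adding a single point does not affect any of the hypotheses). Choose a symmetric compact neighbourhood $K_0$ of $0$ in $V$ and set $\Lambda' := \Lambda + K_0$. This set is symmetric because both $\Lambda$ and $K_0$ are, contains $0$, and since $\Lambda \subset \Lambda' \subset \Lambda + K_0$ it is compactly commensurable to $\Lambda$. By hypothesis there is a compact set $K \subset V$ such that $\Lambda + \Lambda \subset K + \Lambda$, and hence
\[
 \Lambda' + \Lambda' \;=\; \Lambda + \Lambda + K_0 + K_0 \;\subset\; (K + K_0 + K_0) + \Lambda.
\]
The set $K + K_0 + K_0$ is compact and $K_0$ is a neighbourhood of $0$, so there is a finite subset $F \subset V$ with $K + K_0 + K_0 \subset F + K_0$. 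Combining these inclusions gives $\Lambda' + \Lambda' \subset F + K_0 + \Lambda = F + \Lambda'$, which shows that $\Lambda'$ is a (symmetric) approximate subgroup of $V$.

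Now I apply Theorem \ref{Approximate subgroup in vector spaces} directly to $\Lambda'$, obtaining a vector subspace $W \subset V$ compactly commensurable to $\Lambda'$. Compact commensurability is transitive, so chaining with the compact commensurability between $\Lambda'$ and $\Lambda$ established above yields that $W$ is compactly commensurable to $\Lambda$, which is exactly the conclusion.

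The only delicate point is the passage from the compact commensurability hypothesis $\Lambda + \Lambda \subset K + \Lambda$ to the approximate subgroup condition $\Lambda' + \Lambda' \subset F + \Lambda'$ with $F$ \emph{finite}: this is precisely where the thickening $\Lambda' = \Lambda + K_0$ is useful, since it supplies a neighbourhood of $0$ inside $\Lambda'$ that allows any compact subset of $V$ to be covered by finitely many translates of $\Lambda'$. Beyond this observation, the argument is a short reduction to Theorem \ref{Approximate subgroup in vector spaces} and I do not anticipate any further obstacle.
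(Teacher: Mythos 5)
Your argument is correct and is essentially the paper's own proof: the same thickening $\Lambda+K_0$ by a compact symmetric neighbourhood of $0$, the same covering of the compact error set by finitely many translates of that neighbourhood to get a finite $F$ with $\Lambda'+\Lambda'\subset F+\Lambda'$, and the same reduction to Theorem \ref{Approximate subgroup in vector spaces}. No issues.
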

 
 \begin{proof}
  According to Theorem \ref{Approximate subgroup in vector spaces} we only need to show that $\L$ is compactly commensurable to an approximate subgroup. 
  
  Let $U \subset V$ be a symmetric compact neighbourhood of $0$, then $\L + U$ is a symmetric set compactly commensurable to $\L$. Moreover, let $K \subset V$ be a compact subset such that $\L + \L \subset \L + K$ and $F \subset V$ be a finite subset such that $K + U + U  \subset U + F$. Then we have, 
  
  $$ (\L + U) + ( \L + U) \subset \L + K + U + U \subset (\L + U) + F.$$
 \end{proof}

Now, we will extend Theorem \ref{Approximate subgroup in vector spaces} to soluble real algebraic groups. In the proof of the following proposition we rely on the theory of algebraic groups. See \cite{springer2010linear} for a general introduction to linear algebraic groups.

\begin{proposition}\label{Approximate subgroups in soluble algebraic Groups}
 Let $\Lambda\subset \mathbb{G}(\R)$ be an approximate subgroup in the group of $\R$-points of a Zariski-connected soluble real algebraic group such that $\Lambda^{\infty}$ is Zariski-dense. Then there is $H \triangleleft \mathbb{G}(\R)$ closed connected normal subgroup such that $\Lambda$ is compactly commensurable to $H$. Moreover, $H$ is the connected component of the identity of the group of $\R$-points of some normal algebraic subgroup of $\mathbb{G}$.
\end{proposition}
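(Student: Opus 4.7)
The plan is to induct on the derived length $d$ of $\mathbb{G}$. For the base case $d = 1$, the group $\mathbb{G}$ is commutative, so its real identity component $\mathbb{G}(\R)^0$ is a finite-index closed subgroup of the form $V \times T$ with $V$ a vector group and $T$ a compact torus. I would replace $\Lambda$ by the commensurable approximate subgroup $\Lambda^2 \cap \mathbb{G}(\R)^0$ via Lemma \ref{Intersection of approximate subgroups}, project to $V$, and apply Theorem \ref{Approximate subgroup in vector spaces} to obtain a vector subspace $W \subset V$ compactly commensurable to the projection. Then $W \times T$ is the identity component of the $\R$-points of the algebraic subgroup $W \times T \subset V \times T$ and plays the role of $H$; normality is automatic since $\mathbb{G}$ is commutative.

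For the inductive step $d \geq 2$, set $\mathbb{U} = [\mathbb{G}, \mathbb{G}]$, of derived length $d - 1$. First I would apply the inductive hypothesis to the approximate subgroup $\Lambda^2 \cap \mathbb{U}(\R)$ (well-defined by Lemma \ref{Intersection of approximate subgroups}), producing a normal algebraic subgroup $\mathbb{H}_0$ of the Zariski closure of its generated group inside $\mathbb{U}$, whose real identity component is compactly commensurable to $\Lambda^2 \cap \mathbb{U}(\R)$. The Zariski density of $\Lambda^{\infty}$ in $\mathbb{G}$ would then upgrade $\mathbb{H}_0$ to a normal subgroup of $\mathbb{G}$: each conjugate $g \mathbb{H}_0 g^{-1}$ for $g \in \Lambda^{\infty}$ is compactly commensurable to $\mathbb{H}_0$ (since conjugation approximately preserves $\Lambda^2 \cap \mathbb{U}(\R)$), and two closed connected subgroups of a soluble Lie group that are compactly commensurable must coincide.

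Next, in the quotient $\overline{\mathbb{G}} = \mathbb{G}/\mathbb{H}_0$, the commutator set of $\pi(\Lambda)$ becomes relatively compact, since every commutator $[\lambda_1, \lambda_2] \in \Lambda^2 \cap \mathbb{U}(\R)$ lies in $\mathbb{H}_0(\R) \cdot K$ for some compact $K$. To handle this bounded-commutator situation I would invoke the key algebraic-group fact highlighted in the introduction: for an algebraic $\R$-morphism $\phi$, any set $S$ with $\phi(S)$ relatively compact satisfies $S \subset \ker(\phi)(\R) \cdot K'$ for some compact $K'$. Applied to the adjoint representation $\Ad : \overline{\mathbb{G}} \to \GL(\Lie \overline{\mathbb{G}})$ (whose kernel is the center), it reduces matters to showing that $\Ad(\pi(\Lambda))$ is relatively compact. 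This is the Schur-type heart of the argument: boundedness of $[\lambda_1, \lambda_2]$ gives boundedness of the commutators $[\Ad(\lambda_1), \Ad(\lambda_2)]$, and this pointwise control on a Zariski-dense set is propagated to a uniform operator bound via the rigidity of the algebraic morphism $\Ad$. Once $\pi(\Lambda) \subset Z(\overline{\mathbb{G}})(\R) \cdot K''$ for a compact $K''$, the center $Z(\overline{\mathbb{G}})$ is commutative, so the base case applied to $\pi(\Lambda)^2 \cap Z(\overline{\mathbb{G}})(\R)$ (commensurable to $\pi(\Lambda)$ by Lemma \ref{Intersection commensurable approximate subgroups}) yields a normal algebraic subgroup $\mathbb{H}_1 \subset Z(\overline{\mathbb{G}})$; pulling back, $\mathbb{H} := \pi^{-1}(\mathbb{H}_1)$ is the desired normal algebraic subgroup of $\mathbb{G}$ with $\mathbb{H}(\R)^0$ compactly commensurable to $\Lambda$.

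The main obstacle will be this Schur-type step: deducing from mere boundedness of commutators that $\Ad(\pi(\Lambda))$ is relatively compact. Pointwise control on a Zariski-dense set does not automatically yield a uniform operator bound, so the argument has to combine Zariski density with the algebraic-versus-smooth rigidity of group homomorphisms that the authors emphasize in the introduction, and this is where the proof genuinely leaves the analogous abelian template of Schreiber and Fish.
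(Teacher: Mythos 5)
Your induction skeleton is the same as the paper's (induct on derived length, apply the hypothesis inside $[\mathbb{G},\mathbb{G}](\R)$, promote the resulting connected subgroup $H_1$ to a normal one, pass to the quotient, push the image into the centre, finish with the abelian case and pull back), but the step you yourself flag as the ``main obstacle'' is precisely where the proof lives, and your proposed route through it does not work as described. You want to bound $\Ad(\pi(\lambda))$ on all of $\Lie(\overline{\mathbb{G}})$ in one shot, deducing it from boundedness of the commutators $[\lambda_1,\lambda_2]$ plus Zariski density; no argument is offered, and indeed none is available at that level of generality. The paper proceeds in two separate steps, neither of which is ``bounded commutators imply bounded adjoint''. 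First, it bounds $\Ad(\lambda)$ restricted to $\Lie([\tilde{G},\tilde{G}])$ only, and the input is the approximate-subgroup structure rather than commutators: after quotienting by $H_1$ the set $\tilde{\L}^k\cap[\tilde{G},\tilde{G}]$ is relatively compact, its logarithms span $\Lie([\tilde{G},\tilde{G}])$ (co-compactness of $\tilde{\L}^{\infty}\cap[\tilde{G},\tilde{G}]$), and conjugation by any $\lambda\in\tilde{\L}$ maps it into the relatively compact set $\tilde{\L}^{k+2}\cap[\tilde{G},\tilde{G}]$; this gives the uniform operator bound, and the algebraic-morphism compactness fact then yields $\tilde{\L}\subset KZ$ where $Z$ is the centraliser of $\Lie([\tilde{G},\tilde{G}])$ --- not yet the centre. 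The genuinely Schur-like second step happens on $Z$: the maps $\theta_\gamma(h)=[\gamma,h]$ are algebraic homomorphisms on $Z$, their values on bounded-error representatives of $\tilde{\L}$ lie in the relatively compact set $\tilde{\L}^4\cap[\tilde{G},\tilde{G}]$ (here, finally, bounded commutators are used), and choosing finitely many $\gamma_i$ generating a Zariski-dense subgroup forces $\tilde{\L}\subset K'Z(\tilde{G})$. Your single application of $\Ad$ with kernel the centre effectively presupposes the conclusion of these two steps, so this is a genuine gap, not a detail to be filled in later.

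Two further steps fail as stated. From $\pi(\L)\subset Z(\overline{\mathbb{G}})(\R)\cdot K''$ with $K''$ merely compact you cannot invoke Lemma \ref{Intersection commensurable approximate subgroups} to conclude that $\pi(\L)^2\cap Z(\overline{\mathbb{G}})(\R)$ is commensurable to $\pi(\L)$: that lemma needs finitely many translates, and the intersection can be trivial --- already in $\R^2$, a cut-and-project set lying within bounded distance of a line can meet that line only in $0$, although it is compactly commensurable to the line. This is exactly why the paper chooses representatives $g(\lambda)$ in the vector part $W$ of the centre with bounded error $b(\lambda)$, checks that $g(\tilde{\L})+g(\tilde{\L})$ is compactly commensurable to $g(\tilde{\L})$, and applies Lemma \ref{lemma approximate subgroups in vector spaces} (rather than Schreiber's theorem directly) to get a vector subgroup compactly commensurable to $\tilde{\L}$. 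Finally, your normality argument rests on the assertion that two compactly commensurable closed connected subgroups of a soluble Lie group coincide; this is false in general (in $\R\times\T$, the trivial subgroup and $\{0\}\times\T$ are compactly commensurable). What is true, and what you need, is the statement for closed connected subgroups of the simply connected nilpotent group $[\mathbb{G},\mathbb{G}](\R)$, and it requires proof --- this is the paper's Claim \ref{Claim normality}, proved by induction along the central series using that $\log(H_1)$ is closed. With that corrected the promotion of $H_1$ to a normal algebraic subgroup goes through as you intend, but as written both this step and the two above are gaps.
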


\begin{proof}[Proof of Proposition \ref{Approximate subgroups in soluble algebraic Groups}]

 As $\Lambda^{\infty}$ is Zariski-dense, we know that $\Lambda^{\infty} \cap [\mathbb{G}(\R),\mathbb{G}(\R)]$ is Zariski-dense in $[\mathbb{G}(\R),\mathbb{G}(\R)]$. Moreover, $[\mathbb{G}(\R),\mathbb{G}(\R)]$ is a connected simply connected nilpotent Lie group so $\overline{\L^{\infty} \cap [\mathbb{G}(\R),\mathbb{G}(\R)]}$ is co-compact by \cite[Theorem $2.1$]{raghunathan1972discrete}. As a consequence, there is $k \in \N$ greater than $2$ such that $\Lambda':=\Lambda^k \cap [\mathbb{G}(\R),\mathbb{G}(\R)]$ is an approximate subgroup with $\Lambda'^{\infty}$ Zariski-dense in $[\mathbb{G}(\R),\mathbb{G}(\R)]$. 
 
 According to the induction hypothesis there is a closed connected subgroup $H_1 \triangleleft [\mathbb{G}(\R),\mathbb{G}(\R)]$  compactly commensurable to $\Lambda'$. In addition, for all $\lambda \in \Lambda$, we have $\lambda\left( \Lambda'\right) \lambda^{-1} \subset \Lambda^{k+2} \cap [\mathbb{G}(\R),\mathbb{G}(\R)]$. But, according to Lemma \ref{Intersection of approximate subgroups} approximate subgroups $\Lambda^{k+2}\cap [\mathbb{G}(\R),\mathbb{G}(\R)]$ and $\Lambda'$ are commensurable. Therefore, $H_1$ and $\lambda H_1 \lambda^{-1}$ are compactly commensurable.
 
 \begin{claim}\label{Claim normality}
 The subgroups $H_1$ and $\lambda H_1 \lambda^{-1}$ are equal.
 \end{claim}

 Indeed, let $K \subset [\mathbb{G}(\R),\mathbb{G}(\R)]$ be a compact subset such that $\lambda H_1 \lambda^{-1} \subset H_1 K$. We proceed by induction on the length of the upper central series. If $[\mathbb{G}(\R),\mathbb{G}(\R)] \simeq \R^n$ for some $n \in \N$, the result is obvious. Otherwise, let $Z$ be the centre of $[\mathbb{G}(\R),\mathbb{G}(\R)]$, by induction hypothesis the projections of $H_1$ and $\lambda H_1\lambda^{-1}$ to $[\mathbb{G}(\R),\mathbb{G}(\R)]/Z$ are equal. So choose $g \in \lambda H_1\lambda^{-1} \setminus H_1$, there is $z \in Z$ such that $gz \in H_1$, moreover for all $n \in \N$ there are $h_n \in H_1$ and $k_n \in K$ such that $g^n=h_nk_n$. As a consequence, 
 $$ \forall n \in \N,\ z^nk_n =z^n h_n^{-1}g^n = h_n^{-1}(gz)^n \in H_1. $$
 Thus, 
 $$ \forall n \in \N, \log(z) + \frac{\log(k_n)}{n} \in \log(H_1), $$
 where $\log$ is the logarithm map from $[\mathbb{G}(\R),\mathbb{G}(\R)]$ to its Lie alebra. But $\log(H_1)$ is closed (since $\log$ is a homeomorphism) so we get $z \in H_1$ and $g \in H_1$, hence Claim \ref{Claim normality}.
 
 \medbreak

 Now, $\L$ is Zariski dense and $H_1$ is connected, so $H_1$ is normal. Moreover, as $H_1$ is connected in a unipotent subgroup, $H_1$ is the group of $\R$-points of an algebraic subgroup $\mathbb{H}_1$ normal in $\mathbb{G}$. 
 
 Therefore, the natural map $\mathbb{G}(\R)/\mathbb{H}_1(\R) \rightarrow (\mathbb{G}/\mathbb{H}_1)(\R)$ is an embedding and its image contains the connected component of the identity in $(\mathbb{G}/\mathbb{H}_1)(\R)$. Moreover, the Zariski-closure of the image of $\L^{\infty}$ contains the image of $\mathbb{G}(\R)$. 
 
 Set $p: \mathbb{G}(\R)\rightarrow (\mathbb{G}/\mathbb{H}_1)(\R)$ the canonical projection, $\tilde{\Lambda}:=p(\Lambda)$ and $\tilde{G}$ the Zariski-closure of $\tilde{\L}$.
 
 \begin{claim}\label{small commutator close to centraliser}
  The approximate subgroup $\tilde{\L}$ is compactly commensurable to a closed connected subgroup of the centre $Z(\tilde{G})$ of $\tilde{G}$. 
 \end{claim}
 
 Let us first show how Proposition \ref{Approximate subgroups in soluble algebraic Groups} follows from this claim. 
 
 There is some closed connected subgroup $V \subset Z(\tilde{G})$ such that $\tilde{\L}$ is compactly commensurable to $V$. Hence, we can find a compact subset $K_1 \subset \mathbb{G}(\R)$ such that 
 $$ \tilde{\L} \subset p(K_1)V \text{ and } V \subset p(K_1)\tilde{\L}. $$
 In particular, according to the first inclusion, $\L \subset K_1p^{-1}(V)$. On the other hand, there is $K_2 \subset \mathbb{G}(\R)$ compact such that $H_1 \subset K_2\L $, where $H_1$ is the subgroup defined above. Finally, 
 $$ p^{-1}(V) \subset K_1\L H_1 = K_1 H_1\L  \subset  K_1K_2 \L^2. $$
 So $p^{-1}(V)$ is the subgroup we are looking for. 
 
 \bigbreak 
 \bigbreak
 
 Now, let us move to the proof of Claim \ref{small commutator close to centraliser}. We will use the fact that the set of commutators of elements of $\tilde{\L}$ is relatively compact to show that $\tilde{\L}$ is contained in a `neighbourhood' of the centre. 
 
 The group $\tilde{\Lambda}^{\infty} \cap [\tilde{G},\tilde{G}]$ is co-compact in $[\tilde{G},\tilde{G}]$. So there is $k \in \N$ such that 
 $$\Span_{\R}(\log(\tilde{\Lambda}^k \cap [\tilde{G},\tilde{G}]))= \mathfrak{Lie}([\tilde{G},\tilde{G}]),$$
 where $\log$ is the logarithm map from $[\tilde{G},\tilde{G}]$ to its Lie algebra.
 In addition, 
 $$\bigcup\limits_{\lambda \in \tilde{\Lambda}} \lambda \left( \tilde{\Lambda}^k \cap [\tilde{G},\tilde{G}] \right) \lambda^{-1} \subset \tilde{\Lambda}^{k+2} \cap [\tilde{G},\tilde{G}].$$
 But the right-hand side is a relatively compact set. Hence, $(\rest{\Ad(\lambda)}{\mathfrak{Lie}([\tilde{G},\tilde{G}])})_{\lambda \in \L}$ is a uniformly bounded family of linear operators. Since
 \begin{align*}
 \rho: (\mathbb{G}/\mathbb{H})(\R) & \rightarrow \GL(\mathfrak{Lie}([\tilde{G},\tilde{G}])) \\
  g & \mapsto \rest{\Ad(g)}{\mathfrak{Lie}([\tilde{G},\tilde{G}])}
 \end{align*}
is an algebraic group homomorphism, there is a compact set $K \subset \tilde{G}$ such that  
 $$\tilde{\Lambda} \subset \ker(\rho)K.$$ The Kernel $Z:=\ker(\rho)$ has finitely many connected components so there is a compact set $K_3$ such that $\tilde{\Lambda} \subset Z^0K_3$ where $Z^0$ is the connected component of the identity. 
 
 Now, for any $g \in \tilde{G}$ define the map 
 \begin{align*}
  \theta_g : & Z  \rightarrow \tilde{G} \\
	     & h \mapsto [g,h],
 \end{align*}
 where $[g,h]$ denotes $ghg^{-1}h^{-1}$.
 For $h_1,h_2 \in Z$ we have 
 \begin{align*}
  \theta_g(h_1)\theta_g(h_2) & = gh_1g^{-1}h_1^{-1}gh_2g^{-1}h_2^{-1}\\
			     & = gh_1g^{-1}gh_2g^{-1}h_2^{-1}h_1^{-1} & \text{ as } h_1 \in Z \\
			     & = g(h_1h_2)g^{-1}(h_1h_2)^{-1} \\
			     & =  \theta_g(h_1h_2).
 \end{align*}
 So $\theta_g$ is an algebraic group homomorphism.
 
 For all $\lambda \in \tilde{\Lambda}$ let $f(\lambda)$ denote an element of $Z$ such that $\lambda f(\lambda)^{-1}\in K_3$. Now, for $\gamma\in \tilde{\Lambda}$ and $\lambda \in \tilde{\Lambda}$ we have  
 $$ \gamma \lambda \gamma^{-1}  \lambda^{-1} \in \gamma K_3 \gamma^{-1} \theta_{\gamma}(f(\lambda)) K_3^{-1}. $$
 Thus, 
 $$ \theta_{\gamma}(f(\tilde{\Lambda})) \subset \gamma K_3^{-1} \gamma^{-1} (\tilde{\Lambda}^4 \cap [\tilde{G},\tilde{G}]) K_3. $$ 
 So $\theta_{\gamma}(f(\Lambda))$ is a relatively compact set. 
 
 Now, for any finite family $\mathcal{F}:=\left\{\gamma_1,\ldots,\gamma_n\right\}$ of elements of $\tilde{\Lambda}$, set 
 
 \begin{align*}
  \theta_{\mathcal{F}}: Z & \rightarrow (Z)^n \\
  g & \mapsto (\theta_{\gamma_1}(g), \ldots, \theta_{\gamma_n}(g))
 \end{align*}

 We readily see that
 $$\ker(\theta_{\mathcal{F}}) = \bigcap\limits_{1\leq i \leq n} Z_{\tilde{G}}(\gamma_i) \cap Z. $$
 
 We know that $Z$ is an algebraic subgroup and $\theta_{\mathcal{F}}$ is an algebraic group morphism. Moreover, since $\theta_{\mathcal{F}}(f(\tilde{\Lambda}))$ is relatively compact as a subset of $\tilde{G}^n$ and $\theta_{\mathcal{F}}(Z)$ is closed, it is relatively compact as a subset of $\theta_{\mathcal{F}}(Z)$. Thus, there is a compact set $K_4$ such that 
 $f(\L) \subset K_4 \ker(\theta_{\mathcal{F}})$. Set $K_{\mathcal{F}}:=K_3K_4$, we get $\tilde{\Lambda} \subset K_{\mathcal{F}}\ker(\theta_{\mathcal{F}})$. 
 
 Moreover, $\tilde{\Lambda}^{\infty}$ is Zariski-dense so we can choose $\mathcal{F}$ such that it generates a Zariski dense subgroup. Hence, 
 $$\tilde{\Lambda} \subset K_{\mathcal{F}}\ker(\theta_{\mathcal{F}}) = K_{\mathcal{F}}Z(\tilde{G}),$$ 
 where $Z(\tilde{G})$ is the centre of $\tilde{G}$. But $Z(\tilde{G})$ has a finite number of connected components and the connected component of the identity is isomorphic to $\R^k\times \mathbb{T}^l$ for some $k,l \in \N$. Therefore, there is a central subgroup $W \subset \tilde{G}$ and a compact subset $K_5 \subset \tilde{G}$ such that $\tilde{\L} \subset K_5W$ and $W \simeq \R^k$ 
 
 Finally, choose  a function $g: \tilde{\Lambda} \rightarrow W$ such that for all $\lambda \in \tilde{\L}, g(\lambda^{-1})=g(\lambda)^{-1}$ and  $b(\lambda):=g(\lambda)\lambda^{-1} \in K_5$. There is a finite subset $F \subset G$ such that for all $ \lambda_1,\lambda_2 \in\tilde{\Lambda} $ there is $\lambda \in \tilde{\Lambda}$ satisfying $\lambda\lambda_1^{-1}\lambda_2^{-1} \in F$. As a consequence, 
 $$g(\lambda)g(\lambda_1)^{-1}g(\lambda_2)^{-2}=\lambda\lambda_1^{-1}\lambda_2^{-1}b(\lambda)b(\lambda_1)^{-1}b(\lambda_2)^{-1}  \in FK_5K_5^{-2}.$$
 By Lemma \ref{lemma approximate subgroups in vector spaces} we obtain a closed connected subgroup $V_1$ compactly commensurable to $\tilde{\L}$. As $V_1 \subset Z(\tilde{G})$ it is compactly commensurable to its Zariski-closure $V_2$. The connected component of the identity of $V_2$ is the subgroup we are looking for. 
 \end{proof}

\begin{proof}[Proof of Theorem \ref{Main theorem}.]
  Let $G$ be the Zariski-closure of $\L^{\infty}$. Then $G$ is the group of $\R$-points of a soluble algebraic group. Let $\tilde{G}$ denote the group of $\R$-points of its Zariski-connected component of the identity. Proposition \ref{Approximate subgroups in soluble algebraic Groups} applied to $\L^2 \cap \tilde{G}$ yields Theorem \ref{Main theorem}.
\end{proof}

\section{Consequences of Theorem \ref{Main theorem}}

In \cite{MR689763} Fried and Goldman proved that every soluble subgroup $H$ of $\GL_n(\R)$ admit a \emph{syndetic hull} i.e. a closed connected subgroup of $\GL_n(\R)$ containing $H$ and such that $H$ is co-compact in it. We show how this theorem is a consequence of Theorem \ref{Main theorem}. 

\begin{proposition}[Theorem 1.6,\cite{MR689763}]\label{proposition Fired et Goldman}
 Let $G$ be the $\R$-points of a soluble real algebraic group and $H$ a subgroup. Then there is $B < G$ such that $B$ is a closed connected subgroup (for the Euclidean topology), $H \cap B$ has finite index in $H$ and $H$ and $B$ are compactly commensurable.  
\end{proposition}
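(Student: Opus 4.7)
The plan is to apply Theorem~\ref{Main theorem} directly to the subgroup $H$, viewed as its own approximate subgroup (since $H=H^{-1}$ and $H^{2}=H$). Because $H^{\infty}=H$ is soluble (as a subgroup of $G$), Theorem~\ref{Main theorem} produces a Zariski-closed soluble subgroup $B_0 \subset \GL_n(\R)$ compactly commensurable to $H$ and normalised by $\overline{\Xi}^{Z}$ for some approximate subgroup $\Xi$ commensurable to $H$.

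Two preparatory reductions put us in a convenient situation. Commensurable subsets of $\GL_n(\R)$ have Zariski closures sharing the same Zariski identity component, so replacing $H$ by the finite-index subgroup $H \cap (\overline{H}^{Z})^{0}$ we may assume $L := \overline{H}^{Z}$ is Zariski-connected. Replacing $B_0$ by $B_0 \cap L$, a finite-index algebraic subgroup of $B_0$ normal in $L$ (since $B_0$ was normal in the ambient $\overline{\Xi}^{Z}$, which contains $L = (\overline{\Xi}^{Z})^{0}$), we may also assume $B_0 \triangleleft L$. Both replacements preserve compact commensurability between $H$ and $B_0$.

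The heart of the argument is showing that $L/B_0$ is compact. Consider the algebraic projection $\pi : L \to L/B_0$. The image $\pi(H)$ is Zariski-dense in $L/B_0$ (as the image of a Zariski-dense subset under a surjective algebraic morphism) and precompact (since $H \subset K \cdot B_0$ for some compact $K$, by compact commensurability). Using the standard fact that compact subgroups of real algebraic groups are Zariski-closed, the Euclidean closure of $\pi(H)$ is simultaneously Zariski-closed and Zariski-dense in $L/B_0$, and therefore equal to all of $L/B_0$. Hence $L/B_0$ is compact, so $L$ is compactly commensurable to $B_0$ and, by transitivity, to $H$.

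To conclude, set $B := L^{0}$, the Euclidean identity component of the real algebraic group $L$. Since $L$ has only finitely many Euclidean components, $B$ has finite index in $L$, so $B$ is a closed connected Lie subgroup compactly commensurable to $L$ and thus to $H$; moreover, the inclusion $H \subset L$ together with $[L:B]<\infty$ forces $H \cap B$ to have finite index in $H$. The main obstacle is the third step: upgrading ``Zariski-dense and precompact'' to Euclidean density, which relies on the (standard but non-trivial) fact that compact subgroups of a real algebraic group are automatically Zariski-closed.
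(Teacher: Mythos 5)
Your route reaches the stated conclusion, but it is genuinely different from the paper's and heavier in its second half. The paper applies Proposition \ref{Approximate subgroups in soluble algebraic Groups} directly to $H$ (after the Zariski-density/connectedness reduction), which already hands over a \emph{closed connected normal} subgroup $B\triangleleft G$ compactly commensurable to $H$; it then finishes softly: the image of $H$ in $G/B$ has compact closure $K$, and pulling back the Euclidean identity component $K_0$ gives the closed connected subgroup $p^{-1}(K_0)$ in which $\tilde H=H\cap p^{-1}(K_0)$ (finite index in $H$) sits cocompactly. You instead quote Theorem \ref{Main theorem} as a black box, whose conclusion is weaker (Zariski-closed, normalised only by the Zariski closure of some commensurable $\Xi$), and then upgrade: you show the Zariski closure $L$ of (a finite-index subgroup of) $H$ is itself compactly commensurable to $H$, using Zariski density plus precompactness of the image of $H$ in $L/B_0$ and the fact that compact subgroups of $\GL_n(\R)$ are Zariski-closed, and finally take $B=L^{0}$. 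That middle argument is correct (modulo the usual care with $\mathbb{L}(\R)/\mathbb{B}_0(\R)$ versus $(\mathbb{L}/\mathbb{B}_0)(\R)$, where the image of the real points is open hence closed), and it buys a slightly more precise conclusion ($B$ is the Euclidean identity component of the Zariski closure of a finite-index subgroup of $H$); what it costs is the nontrivial input ``compact linear groups are algebraic,'' which the paper's argument avoids entirely.

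The one step whose justification does not stand as written is the normality reduction. You assert that $L=(\overline{\Xi}^{Z})^{0}$ and that $B_0\cap L\triangleleft L$ ``since $B_0$ was normal in the ambient $\overline{\Xi}^{Z}$, which contains $L$.'' But $\Xi$ is only an approximate subgroup, so $\overline{\Xi}^{Z}$ is not known to be a group: ``$(\overline{\Xi}^{Z})^{0}$'' is not well-defined, and neither the containment $L\subset\overline{\Xi}^{Z}$ nor the claim that commensurable sets have Zariski closures with the same identity component is justified (commensurability only gives $\overline{H}^{Z}\subset F\,\overline{\Xi}^{Z}$ and $\overline{\Xi}^{Z}\subset F'\,\overline{H}^{Z}$ for finite $F,F'$). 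The fix is short: the normaliser $N:=N_{\GL_n(\R)}(B_0)$ is a Zariski-closed subgroup containing $\Xi$, hence $H\subset F\Xi\subset FN$, so $H\cap N$ has finite index in $H$; its Zariski closure is then a Zariski-closed finite-index subgroup of $\overline{H}^{Z}$, hence contains $L$, so $L$ normalises $B_0$. (Similarly, $B_0\cap L$ has finite index in $B_0$ because $B_0\subset\overline{\Xi}^{Z}\subset F'\,\overline{H}^{Z}$, which is what keeps it compactly commensurable to $H$.) With that patch your proof is complete; alternatively, invoking Proposition \ref{Approximate subgroups in soluble algebraic Groups} instead of Theorem \ref{Main theorem} makes the whole normality discussion unnecessary, which is exactly what the paper does.
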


\begin{proof}
 Without loss of generality we can assume that $H$ is Zariski-dense. Then applying Proposition \ref{Approximate subgroups in soluble algebraic Groups} to $H$ we get a closed connected normal subgroup $B \triangleleft G$ such that $H$ is compactly commensurable to $B$. So the image of $H$ in $G/B$ via $p:G \rightarrow G/H$ is contained in a compact subgroup $K$. Let $K_0$ be its connected component of the identity in the Euclidean topology, then set $\tilde{H} = H \cap p^{-1}(K_0)$. The subgroup $\tilde{H}$ has finite index in $H$ and is co-compact in $p^{-1}(K_0)$. 
\end{proof}

We also get a generalisation of the well-known fact that closed soluble subgroups of $\GL_n(\R)$ are compactly generated. 

\begin{proposition}\label{compact generation}
Let $G$ be the $\R$-points of a soluble real algebraic group and $\L \subset G$ an approximate subgroup. Then there is a compact subset $K$ such that $\L^2\cap K$ generates $\L^{\infty}$.  
\end{proposition}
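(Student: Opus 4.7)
The plan is to combine Theorem \ref{Main theorem} with a direct telescoping argument inside the closed connected Lie subgroup supplied by that theorem.

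First I would invoke Theorem \ref{Main theorem} to produce a closed subgroup $H \subset G$ compactly commensurable to $\L$. Inspecting the proof of Theorem \ref{Main theorem} (which routes through Proposition \ref{Approximate subgroups in soluble algebraic Groups}) shows that $H$ can in fact be taken to be the Euclidean identity component of the $\R$-points of some normal algebraic subgroup; in particular, $H$ is a closed connected Lie subgroup of $G$. I would then fix a symmetric compact $K_0 \ni e$ realising the commensurability (so $\L \subset K_0 H$ and $H \subset K_0 \L$) together with a compact symmetric neighbourhood $C$ of $e$ in $H$ that generates $H$ as an abstract group, available because $H$ is a connected Lie group.

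The heart of the argument is then a chain computation. Given $\lambda \in \L$, write $\lambda = k_* h_*$ with $k_* \in K_0$ and $h_* \in H$, and factor $h_* = c_n c_{n-1} \cdots c_1$ with each $c_i \in C$. Form the partial products $b_i := c_i c_{i-1} \cdots c_1 \in H$, so $b_0 = e$ and $b_n = h_*$. Each $b_i$ lies in $H \subset K_0 \L$ and hence admits a decomposition $b_i = k_i \lambda_i$ with $k_i \in K_0$ and $\lambda_i \in \L$; I would pin down the endpoint choices $k_0 = e$, $\lambda_0 = e$ and $k_n = k_*^{-1}$, $\lambda_n = \lambda$, legitimate precisely because $K_0$ is symmetric (so $k_*^{-1} \in K_0$) and $e \in K_0 \cap \L$. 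The recursion $b_{i+1} = c_{i+1} b_i$ then gives
\[
\lambda_{i+1}\lambda_i^{-1} = k_{i+1}^{-1} c_{i+1} k_i \in K_0 C K_0,
\]
which lies in the fixed compact set $K := K_0 C K_0$ and, by symmetry of $\L$, also in $\L \cdot \L^{-1} = \L^2$. Telescoping,
\[
\lambda = \lambda_n = \bigl(\lambda_n \lambda_{n-1}^{-1}\bigr) \bigl(\lambda_{n-1} \lambda_{n-2}^{-1}\bigr) \cdots \bigl(\lambda_1 \lambda_0^{-1}\bigr) \in \langle \L^2 \cap K \rangle.
\]
Since this works for every $\lambda \in \L$, we obtain $\L^{\infty} = \langle \L \rangle \subset \langle \L^2 \cap K \rangle \subset \L^{\infty}$, so that $\L^2 \cap K$ generates $\L^{\infty}$.

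The main obstacle is really subsumed by Theorem \ref{Main theorem}: producing a closed connected subgroup compactly commensurable to $\L$ is the substantive step. Once $H$ is available, the telescoping identity is classical in spirit, and the only delicate bookkeeping is the symmetry of $K_0$, which is exactly what allows the boundary conditions $\lambda_0 = e$ and $\lambda_n = \lambda$ of the chain to be matched simultaneously.
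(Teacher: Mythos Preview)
Your proposal is correct and follows essentially the same approach as the paper's own proof. Both arguments invoke the structure result to obtain a closed connected subgroup $H$ compactly commensurable to $\L$, choose a compact symmetric set $K_0$ realising the commensurability together with a compact generating neighbourhood of $e$ in $H$, and then run a telescoping/chain argument along partial products in $H$ to express each $\lambda\in\L$ as a product of elements of $\L^2$ lying in a fixed compact set; the only differences are notational (your $b_i$ and $K_0CK_0$ play the role of the paper's $h_i$ and $KV^{-1}K^{-1}$).
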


\begin{proof}
As a consequence of Proposition \ref{Approximate subgroups in soluble algebraic Groups}, there is a connected subgroup $H \leq G$ such that $G$ and $\L$ are compactly commensurable. Let $K \subset G$ be a compact symmetric subset such that $\L \subset K H$ and $H \subset K \L$. Choose also $V$ a compact neighbourhood of the identity in $H$. As $H$ is connected for the Euclidean topology, $V$ generates $H$. Now, for any $\lambda \in \L$ choose $h \in H$ such that $\lambda h^{-1} \in K$. Since $V$ generates $H$, we can find a sequence $(h_i)_{0\leq i \leq r}$ of elements of $H$ such that $h_0=e,h_r=h$ and $h_{i+1}h_i^{-1} \in V$. In addition, we can find a sequence $(\lambda_i)_{0 \leq i \leq n}$ of elements of $\L$ such that $\lambda_0=e,\lambda_r=\lambda$ and for all $0 \leq i \leq r$, $\lambda_ih_i^{-1} \in K^{-1}=K$. Thus, $\lambda_{i+1}\lambda_i^{-1} \in KV^{-1}K^{-1}$. Finally, $\L^{\infty}$ is generated by $\L^2 \cap KV^{-1}K^{-1}$.
\end{proof}

In particular, when $\L$ is discrete, this implies that $\L^{\infty}$ is finitely generated. This fact will be used in the proof of Theorem \ref{Meyer's theorem soluble} below. 

Finally, we generalise a theorem from \cite[Theorem 4.25]{bjorklund2016approximate}, who handled the nilpotent case. This result is concerned with \emph{strong approximate lattices}. Strong approximate lattices are defined by measure-theoretic conditions on an associated dynamical system called the invariant hull. We refer the reader to \cite[Section 4]{bjorklund2016approximate} for precise definitions. 

\begin{theorem}\label{strong approximate lattices in soluble groups}
 Let $\Lambda \subset G$ be a strong approximate lattice in the group of $\R$-points of a soluble real algebraic group. Then $\Lambda$ is relatively dense.
\end{theorem}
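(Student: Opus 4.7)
The plan is to combine the structural result of Theorem \ref{Main theorem} with the non-triviality of the $G$-invariant probability measure on the hull that characterises a strong approximate lattice. Applying Theorem \ref{Main theorem} to $\Lambda$ yields a closed connected Zariski-closed soluble subgroup $H \leq G$ compactly commensurable to $\Lambda$: there exists a compact $K \subset G$ with $\Lambda \subset KH$ and $H \subset K\Lambda$. Relative denseness of $\Lambda$ in $G$ is equivalent to co-compactness of $H$ in $G$---the forward direction is immediate, while $G = K'H$ combined with $H \subset K\Lambda$ gives $G \subset K'K\Lambda$. It therefore suffices to show that $G/H$ is compact.

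Suppose for contradiction that $G/H$ is non-compact, and let $\pi \colon G \to G/H$ denote the quotient map. Since $\Lambda \subset KH$, the projection $\pi(\Lambda) \subset \pi(K)$ is relatively compact. Choose a proper left-invariant metric $d$ on $G$ and induce the proper left-$G$-invariant metric $\tilde{d}(gH, g'H) := \inf_{h \in H} d(g, g'h)$ on $G/H$; set $D := \mathrm{diam}_{\tilde{d}}(\pi(K))$. By $G$-invariance of $\tilde{d}$, every left-translate $\pi(g\Lambda)$ has diameter at most $D$, a bound preserved under Chabauty limits since $\mathcal{C}_{\leq D}(G/H) := \{C \in \mathcal{C}(G/H) : \mathrm{diam}(C) \leq D\}$ is Fell-closed. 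Consequently, $\bar{\pi}(P) := \overline{\pi(P)}$ defines a Borel $G$-equivariant map $\bar{\pi} \colon \Omega_{\Lambda} \to \mathcal{C}_{\leq D}(G/H)$, and the push-forward $\bar{\mu} := \bar{\pi}_{*}\mu$ of the invariant probability measure on $\Omega_{\Lambda}$ is a $G$-invariant probability measure supported on $\mathcal{C}_{\leq D}(G/H)$.

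The contradiction arises from a packing argument in $G/H$. Fix a pre-compact open neighbourhood $U$ of the identity coset, and choose a sequence $(g_i)_{i \geq 1}$ in $G/H$ with pairwise $\tilde{d}$-distances exceeding $D + \mathrm{diam}(U) + 1$, which is possible because $G/H$ is non-compact. Any $C$ of diameter $\leq D$ then satisfies $\sum_{i \geq 1} \mathds{1}[C \cap g_iU \neq \emptyset] \leq 1$. Setting $c := \bar{\mu}(\{C : C \cap U \neq \emptyset\})$ and using $G$-invariance of $\bar{\mu}$,
\begin{equation*}
\sum_{i \geq 1} c \;=\; \sum_{i \geq 1} \bar{\mu}\bigl(\{C : C \cap g_iU \neq \emptyset\}\bigr) \;=\; \int \sum_{i \geq 1}\mathds{1}[C \cap g_iU \neq \emptyset]\, d\bar{\mu}(C) \;\leq\; 1,
\end{equation*}
so $c = 0$. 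Combining with $G$-invariance and $\sigma$-compactness of $G/H$ yields $\bar{\mu}$-a.e.\ $C = \emptyset$. But $\bar{\pi}$ sends non-empty subsets to non-empty subsets, so this would force the measure $\mu$ to be concentrated on the empty set, contradicting the non-triviality imposed by the definition of strong approximate lattice.

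I expect the main obstacle to be checking the Borel measurability of $\bar{\pi}$ and the Fell-closedness of $\mathcal{C}_{\leq D}(G/H)$ in the pseudo-metric induced on the (possibly non-normal) coset space $G/H$: these should be technical but standard consequences of the properness of $d$ and the continuity of $\pi$, together with the fact that $H$ is closed.
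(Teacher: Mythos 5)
Your route is genuinely different from the paper's. The paper imports two external facts --- that strong approximate lattices are bi-syndetic, $G=K_1\Lambda K_1$ \cite[Theorem 4.18]{bjorklund2016approximate}, and that they are Zariski-dense \cite{bjorklund2019borel} --- so that Proposition \ref{Approximate subgroups in soluble algebraic Groups} produces a \emph{normal} closed connected $H$ with $\L\subset K_2H$ and $H\subset K_2\L$; relative denseness is then the one-line computation $G=K_1\Lambda K_1\subset K_1K_2HK_1=K_1K_2K_1H\subset K_1K_2K_1K_2\Lambda$. You instead try to extract relative denseness directly from the invariant probability measure on the hull by a packing argument in $G/H$; in spirit this re-derives a form of the bi-syndeticity theorem rather than quoting it, which is a legitimate alternative --- but it breaks at exactly the point you dismiss as ``technical but standard''.

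The gap is that you never establish that $H$ is normal in $G$ (Theorem \ref{Main theorem} only gives normality under the Zariski closure of a commensurable approximate subgroup), and for a non-normal closed subgroup the formula $\tilde d(gH,g'H)=\inf_{h\in H}d(g,g'h)$ is not a metric: with $d$ merely left-invariant it fails both symmetry and the triangle inequality. Worse, no proper $G$-invariant metric on $G/H$ need exist at all: any invariant ball $B(o,R)$ around the base point is $H$-invariant, hence a union of sets $\pi(HcH)$, and these are relatively compact only under the coarse normality condition $HCH\subset C'H$ for compacts. This fails already in the soluble group $G=\R^2\rtimes\R$ (with $\R$ acting by $\mathrm{diag}(e^t,e^{-t})$) for $H$ the diagonal line in $\R^2$: there $HCH$ contains entire planes $\R^2\times\{t\}$, whose image in $G/H$ is unbounded. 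Without such a metric the translates $g\pi(K)$ are not uniformly separated, and the inequality $\sum_i \mathds{1}[C\cap g_iU\neq\emptyset]\le 1$ that drives your argument has no justification. The repair is to first invoke the Borel density theorem for strong approximate lattices \cite{bjorklund2019borel}: Zariski-density of $\Lambda^{\infty}$ is precisely what makes the subgroup furnished by Proposition \ref{Approximate subgroups in soluble algebraic Groups} normal in $G$, after which $G/H$ is a locally compact group, your $\tilde d$ is a genuine proper invariant metric, and your packing argument (or the paper's two-line finish) goes through.
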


\begin{proof}
 Indeed, according to \cite[Theorem 4.18]{bjorklund2016approximate} any strong approximate lattice is bi-syndetic i.e. there is $K_1 \subset G$ compact such that $G = K_1 \Lambda K_1$. Moreover, $\Lambda$ is Zariski-dense according to \cite{bjorklund2019borel}. Now let $H$  and $K_2$ be given by Proposition \ref{Approximate subgroups in soluble algebraic Groups} so that 
 $$ \L \subset K_2 H \text{ and } H \subset K_2 \L. $$
 Since $H$ is normal we have that $G = K_1K_2HK_1 = K_1K_2K_1H$.  Then $G = K_1K_2K_1K_2 \Lambda$.
\end{proof} 

\section{Uniform approximate lattices in abelian groups}

We will investigate morphisms commensurating approximate subgroups in $\R^n$. This will turn out to be useful in the proof of Theorem \ref{Meyer's theorem soluble}. Our goal is to understand morphisms that commensurate a uniform approximate lattice. Let us start with a result concerning lattices.

\begin{proposition}\label{Approximate sugroup commensurating a lattice}
 Let $\L \subset \GL_n(\R)$ be an approximate subgroup and suppose there are $\Gamma_1 \subset \Gamma_2$ lattices in $\R^n$ such that $\lambda(\Gamma_1) \subset \Gamma_2$ for all $\lambda \in \L$. Then there is $\Xi \subset \L^4$ commensurable to $\L$ such that $\Xi \subset \Aut(\Gamma_1)$. 
\end{proposition}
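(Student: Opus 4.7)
The plan is to observe that $\Lambda$ decomposes into finitely many cosets modulo $\Aut(\Gamma_1)$ and to extract $\Xi$ by intersecting $\Lambda^2$ with this stabiliser. Concretely, I would first bound the determinants of elements of $\Lambda$, then deduce that the orbit $\{\lambda(\Gamma_1) : \lambda \in \Lambda\}$ is a finite collection of sublattices of $\Gamma_2$, and finally apply Lemma~\ref{Intersection commensurable approximate subgroups}.

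For the bound on determinants, set $m := [\Gamma_2 : \Gamma_1]$. Comparing covolumes in $\R^n$ gives
\[
[\Gamma_2 : \lambda(\Gamma_1)] = m\,|\det\lambda|,
\]
which is a positive integer by the hypothesis $\lambda(\Gamma_1) \subset \Gamma_2$. Symmetry of $\Lambda$ yields $\lambda^{-1}\in\Lambda$, and hence $m\,|\det\lambda|^{-1} \in \Z_{>0}$ as well. Since the product of these two positive integers is $m^2$, each is a divisor of $m^2$, so $|\det\lambda|$ ranges over a finite set of rationals as $\lambda$ varies in $\Lambda$.

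Consequently the index $[\Gamma_2 : \lambda(\Gamma_1)]$ is bounded. Since $\Gamma_2 \cong \Z^n$ has only finitely many subgroups of any given finite index (Smith normal form), the set
\[
S := \{\lambda(\Gamma_1) : \lambda \in \Lambda\}
\]
is finite. Pick one representative $\lambda_s \in \Lambda$ for each $s \in S$ and set $F := \{\lambda_s : s \in S\} \subset \Lambda$. Whenever $\lambda \in \Lambda$ satisfies $\lambda(\Gamma_1) = s$, one has $(\lambda_s^{-1}\lambda)(\Gamma_1) = \Gamma_1$, i.e.\ $\lambda_s^{-1}\lambda \in \Aut(\Gamma_1)$. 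This gives the coset decomposition $\Lambda \subset F\cdot\Aut(\Gamma_1)$ with $F$ finite.

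It now suffices to invoke Lemma~\ref{Intersection commensurable approximate subgroups} with the approximate subgroup $\Aut(\Gamma_1)$ (a genuine group, so $\Aut(\Gamma_1)^2 = \Aut(\Gamma_1)$): it yields that $\Lambda^2 \cap \Aut(\Gamma_1)$ is commensurable to $\Lambda$, and this intersection sits inside $\Lambda^2 \subset \Lambda^4$, providing the required $\Xi$. The only genuinely non-formal step is the finiteness of $|\det\lambda|$ on $\Lambda$; once that is in hand, everything else is bookkeeping around the intersection lemma.
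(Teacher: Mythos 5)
Your proof is correct, but it follows a genuinely different and more elementary route than the paper. The paper first reduces to the unimodular case by showing $|\det(\cdot)|$ takes finitely many values on $\L$ (its image lies in the discrete set $\frac{1}{m^n}\Z$ and is symmetric, hence bounded and finite), and then runs a $p$-adic compactness argument: it embeds $\tilde\L \subset \SL_n(\Z[\frac 1m])$ diagonally into $\prod_i \SL_n(\Q_{p_i})$, where $\iota(\tilde\L)$ is relatively compact and $\prod_i\SL_n(\Z_{p_i})$ is open, so finitely many cosets of $\SL_n(\Z)$ cover $\tilde\L$. You instead bound the index $[\Gamma_2:\lambda(\Gamma_1)]=m|\det\lambda|$ directly --- your observation that this integer and its counterpart for $\lambda^{-1}$ multiply to $m^2$ is a clean (and arguably slicker) replacement for the paper's discreteness argument --- and then use the finiteness of the set of sublattices of $\Z^n$ of bounded index to conclude that the orbit $\{\lambda(\Gamma_1):\lambda\in\L\}$ is finite, which gives the coset decomposition $\L\subset F\cdot\Aut(\Gamma_1)$ in one step, with no $p$-adic machinery and with an effective bound on $|F|$. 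Both arguments then finish identically by applying Lemma \ref{Intersection commensurable approximate subgroups} (together with Lemma \ref{Intersection of approximate subgroups}, which guarantees the intersection is again an approximate subgroup). The only thing the paper's route buys that yours does not is landing in $\SL_n(\Z)$ rather than $\GL_n(\Z)$, which is immaterial here; your version is self-contained and entirely at the level of lattice index counting.
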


\begin{proof}
 We can assume that $\Gamma_1 = \Z^n$. Let $m$ be the order of $\Gamma_2/\Gamma_1$ and $p_1,\ldots,p_r$ the prime factors of $m$. Then any matrix in $\L$ has entries lying in $\frac{1}{m}\Z$. 
 
 Set
 \begin{align*}
  \phi: \GL_n(\R) & \rightarrow \R_+^* \\
   M & \mapsto |\det(M)|
  \end{align*}
 then $\phi$ is a group homomorphism and $\phi(\L) \subset \frac{1}{m^n}\Z$  is a discrete approximate subgroup bounded away from $0$ so $\phi(\L)$ is finite. As a consequence, 
 $$\tilde{\L} = \phi^{-1}(\{1\}) \cap \L^2$$
 is an approximate subgroup commensurable to $\L$.
 \medbreak
 Consider the diagonal embedding
 $$ \iota: \SL_n\left(\Z\left[\frac{1}{m}\right]\right) \hookrightarrow \prod\limits_{i=1}^{r} \SL_n(\Q_{p_i}). $$
 Now, $\iota^{-1}(\prod\limits_{i=1}^{r} \SL_n(\Z_{p_i})) = \SL_n(\Z)$, $\iota(\tilde{\L})$ is relatively compact and $\prod\limits_{i=1}^r \SL_n(\Z_{p_i})$ is open. Therefore, there are $\lambda_1,\ldots,\lambda_s \in \tilde{\L}$ such that 
 \begin{align*}
   \iota(\tilde{\L}) & \subset \bigcup\limits_{i=1}^s \iota(\lambda_s) (\prod\limits_{i=1}^r \SL_n(\Z_{p_i})). 
 \end{align*}
 Now, $\tilde{\L}$ is commensurable to the approximate subgroup $$\tilde{\L}^2 \cap \iota^{-1}(\prod\limits_{i=1}^r \SL_n(\Z_{p_i}))= \tilde{\L}^2 \cap \SL_n(\Z).$$
\end{proof}

Now, we can deduce

\begin{proposition}\label{Commensurator quasi-crystals}
  Let $\L \subset \GL_n(\R)$ be an approximate subgroup and suppose there are $\L_1 \subset \L_2$ approximate lattices in $\R^n$ such that $\lambda(\L_1) \subset \L_2$ for all $\lambda \in \L$. Then there are $\Xi \subset \L^4$ commensurable to $\L$ and an injective group homomorphism $\Xi^{\infty} \rightarrow \SL_m(\Z)$ for some $m \geq n$. .   
\end{proposition}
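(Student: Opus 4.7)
The plan is to reduce to Proposition~\ref{Approximate sugroup commensurating a lattice} by lifting the approximate lattices $\L_1 \subset \L_2$ in $\R^n$ to genuine lattices $\Gamma_1 \subset \Gamma_2$ in a higher-dimensional Euclidean space $\R^m$ through the cut-and-project construction underlying Meyer's theorem.

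Since $\L_2$ is a Meyer subset (by Meyer's theorem cited in the introduction), there is a good model $f: \L_2^\infty \to H$ embedding $\L_2^\infty$ as a uniform lattice $\Gamma_2$ in $\R^n \times H$ via $\gamma \mapsto (\gamma, f(\gamma))$. After reducing $H$ to a finite-dimensional real vector space (using the structure of locally compact abelian groups and the freedom to modify the good model), we have $\Gamma_2 \simeq \Z^m$ in $\R^m$ with $m = n + \dim H \geq n$. Restricting $f$ to $\L_1^\infty$ gives a discrete subgroup $\Gamma_1 \subset \Gamma_2$, and after replacing $\L_1$ by a commensurable approximate lattice --- for instance, intersecting $\L_2^k$ with the $\Q$-saturation of $\L_1^\infty$ in $\L_2^\infty$ for a suitable $k$ (via Lemma~\ref{Intersection of approximate subgroups}) --- I may assume $\Gamma_1$ has full rank $m$ in $\R^m$, so that $\Gamma_1 \subset \Gamma_2$ are both full-rank lattices with $\Gamma_1$ of finite index in $\Gamma_2$.

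For each $\lambda \in \L$, the action on $\L_1^\infty$ extends $\Q$-linearly to the $\Q$-vector space $\L_2^\infty \otimes \Q = \L_1^\infty \otimes \Q$, and passing to its $\R$-span produces a lift $\tilde{\lambda} \in \GL_m(\R)$ acting as $\lambda$ on the $\R^n$-factor and by the induced map $\lambda_H$ on the $H$-factor. The family $\tilde{\L} \subset \GL_m(\R)$ is an approximate subgroup satisfying $\tilde{\lambda}(\Gamma_1) \subset \Gamma_2$ for all $\tilde{\lambda} \in \tilde{\L}$. Applying Proposition~\ref{Approximate sugroup commensurating a lattice} to $\tilde{\L}$ with the lattices $\Gamma_1 \subset \Gamma_2$, I obtain $\tilde{\Xi} \subset \tilde{\L}^4$ commensurable to $\tilde{\L}$ with $\tilde{\Xi} \subset \Aut(\Gamma_1) \cong \GL_m(\Z)$. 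Pulling back gives $\Xi \subset \L^4$ commensurable to $\L$ together with an injective group homomorphism $\Xi^\infty \to \GL_m(\Z)$, the injectivity following because $\L_1^\infty$ spans $\R^n$ as a real vector space (being relatively dense, hence of full $\R$-span). Composing with the standard embedding $\GL_m(\Z) \hookrightarrow \SL_{m+1}(\Z)$ given by $M \mapsto \mathrm{diag}(M, (\det M)^{-1})$ then yields the desired embedding into $\SL_{m+1}(\Z)$ with $m + 1 \geq n$.

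The main obstacle is the rank-matching reduction --- arranging that $\L_1^\infty$ and $\L_2^\infty$ have the same rank so that both $\Gamma_1$ and $\Gamma_2$ become full-rank lattices in $\R^m$. In general, commensurable approximate lattices in $\R^n$ can generate subgroups of different ranks, and achieving the equality requires a careful use of the cut-and-project structure together with a replacement of $\L_1$ within its commensurability class that still preserves the hypothesis $\lambda(\L_1) \subset \L_2$. A secondary technical point is the consistent definition of $\lambda_H$ as an $\R$-linear map on $H$, which becomes available once the ranks agree and $\lambda$ acts $\Q$-linearly on $f(\L_2^\infty)$.
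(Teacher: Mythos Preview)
Your outline has a genuine gap precisely where you flag it: the rank-matching step. Your proposed fix --- replacing $\L_1$ by $\L_2^k \cap (\text{$\Q$-saturation of } \L_1^\infty \text{ in } \L_2^\infty)$ --- does not increase the rank of $\L_1^\infty$, since the $\Q$-saturation of $\L_1^\infty$ inside $\L_2^\infty$ has the same rank as $\L_1^\infty$. So $\Gamma_1$ still need not be a full-rank lattice in $\R^m$, and Proposition~\ref{Approximate sugroup commensurating a lattice} does not apply. Enlarging $\L_1$ within its commensurability class to force $\rank(\L_1^\infty)=\rank(\L_2^\infty)$ would in general destroy the hypothesis $\lambda(\L_1)\subset\L_2$; shrinking $\L_2$ has the same problem. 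A secondary issue: your claim that $\tilde{\lambda}$ acts block-diagonally as $\lambda \times \lambda_H$ on $\R^n \times H$ is not correct in general --- the lift $\tilde{\lambda}$ obtained by extending the $\Q$-linear action to $\R^m$ has no reason to preserve that splitting --- though this is not fatal since only $\tilde{\lambda}(\Gamma_1)\subset\Gamma_2$ is needed.

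The paper's argument bypasses Meyer's theorem entirely and resolves the rank issue by a minimality trick. One picks an approximate subgroup $\Xi$ commensurable to $\L_1$ of \emph{minimal} rank (among those contained in some $\L_1^k$). Then for every $\lambda\in\L$, the approximate group $\Xi^2\cap\lambda(\Xi^2)$ is commensurable to $\Xi$, and minimality forces $(\Xi^2\cap\lambda(\Xi^2))^\infty$ to have finite index in $\Xi^\infty$; hence $\lambda$ preserves $\Span_\Q(\Xi^\infty)$. This gives directly an injective map $\L^\infty\to\GL_m(\Q)$ with $m=\rank(\Xi^\infty)$. The lattices $\Gamma_1=\Xi^\infty$ and $\Gamma_2=\L_2^\infty\cap\Span_\Q(\Xi)$ then automatically satisfy the finite-index condition (since $\Gamma_2$ lies in an $m$-dimensional $\Q$-space and contains a rank-$m$ subgroup), and Proposition~\ref{Approximate sugroup commensurating a lattice} finishes the job. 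The key idea you are missing is this use of minimality to pin down an invariant $\Q$-structure; it is more elementary than the cut-and-project detour and makes the rank-matching automatic.
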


 This result is not needed in the sequel, however it gives a good insight into the remaining part of the proof of Theorem \ref{Meyer's theorem soluble}. Indeed, a similar argument will be used to prove Proposition \ref{Generated group is polycyclic}.

\begin{proof}
 For any $\Xi$ commensurable to $\L_1$ the subgroup $\Xi^{\infty}$ has finite rank. Choose $\Xi $ commensurable to $\L_1$ with minimal rank, then $\Xi^2 \cap \L_1^2$ is a uniform approximate lattice as well and $\rank(\Xi^2 \cap \L_1^2) \leq \rank(\Xi^2)$, so there is equality. Thus, we can assume that $\Xi \subset \L_1^2$.
 
 As a consequence, for all $\lambda \in \L$ the approximate group $\lambda(\Xi)$ is commensurable to $\lambda(\L_1^2)$ which in turn is commensurable to $\L_2^2$. So $\Xi$ and $\lambda(\Xi)$ are commensurable. Hence, $\Xi$ is commensurable to $\Xi^2 \cap \lambda(\Xi^2)$. By minimality of $\rank(\Xi^{\infty})$ we get that
 $$\rank((\Xi^2 \cap \lambda(\Xi^2))^{\infty}) = \rank(\Xi^{\infty})= \rank(\lambda(\Xi^{\infty})), $$
 so $(\Xi^2 \cap \lambda(\Xi^2))^{\infty}<_{f.i.} \Xi^{\infty}.$
 
 Therefore, $\lambda$ is an isomorphism of $\Span_{\Q}(\Xi)$ and as $\Span_{\R}(\Xi)=\R^n$ we get an injective morphism $\L^{\infty} \rightarrow \GL_m(\Q)$ where $m=\rank(\Xi)$. 
 
 Finally, for all $\lambda \in \L$ we have $\lambda(\Xi^{\infty}) \subset \L_2^{\infty} \cap \Span_{\Q}(\Xi)$. Since 
 
$$ \rank\left( \L_2^{\infty} \cap \Span_{\Q}(\Xi)\right)  = \dim_{\Q}(\Span_{\Q}(\Xi)) $$
 we get that $\Xi^{\infty}$ has finite index in $ \L_2^{\infty} \cap \Span_{\Q}(\Xi)$. So Proposition \ref{Approximate sugroup commensurating a lattice} applied to $\Xi^{\infty},  \L_2^{\infty} \cap \Span_{\Q}(\Xi)$ and $\L$ gives the desired morphism. 
\end{proof}

\begin{remark}
 From the proof of Proposition \ref{Commensurator quasi-crystals}, we have that for any discrete approximate lattice $\L \subset \R^n$ the subgroup $\left\{g \in \GL_n(\R) | g(\L) \text{ is commensurable to } \L \right\}$ is isomorphic to a subgroup of $\GL_m(\Q)$ where $m$ is the minimal rank of an approximate subgroup commensurable to $\Xi$. 
\end{remark}

\section{Meyer's Theorem for soluble Lie groups}

We will now turn to the proof of Theorem \ref{Meyer's theorem soluble}. As a first step, let us prove it with an additional assumption. 

\begin{proposition}\label{polycyclic implies Meyer}
 Let $\L \subset G$ be a uniform approximate lattice in a connected soluble Lie group. If $\L^{\infty}$ is polycyclic it is a Meyer subset. 
\end{proposition}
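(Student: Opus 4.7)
Proof plan. The goal is to build a good model, in the sense recalled in the introduction, for a uniform approximate lattice commensurable to $\L$. By the correspondence between good models and cut-and-project schemes (so between good models and Meyer subsets), this is exactly what is needed to conclude that $\L$ is a Meyer subset.

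Start with Auslander's embedding theorem: since $\L^\infty$ is polycyclic, there is a finite-index subgroup $\Gamma_0 \leq \L^\infty$ and an injective homomorphism $\iota \colon \Gamma_0 \hookrightarrow \mathbb{G}(\R)$, where $\mathbb{G}$ is a Zariski-connected soluble real algebraic group and $\iota(\Gamma_0)$ is a uniform lattice. Replace $\L$ by $\L^2 \cap \Gamma_0$, which is still a uniform approximate lattice commensurable to $\L$ by Lemma \ref{Intersection commensurable approximate subgroups}. After possibly shrinking $\mathbb{G}$ to the Zariski identity component of the Zariski-closure of $\iota(\L^\infty)$ (absorbing another finite-index reduction in the same way), I may assume that $\iota(\L^\infty)$ is a discrete, Zariski-dense subgroup of $\mathbb{G}(\R)$.

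Now invoke Proposition \ref{Approximate subgroups in soluble algebraic Groups} applied to $\iota(\L)$: there is a closed connected normal subgroup $N \triangleleft \mathbb{G}(\R)$ and a compact set $K \subset \mathbb{G}(\R)$ with $\iota(\L) \subset KN$ and $N \subset K\iota(\L)$. Let $q \colon \mathbb{G}(\R) \to \mathbb{G}(\R)/N$ be the quotient map and set $f := q \circ \iota \colon \L^\infty \to \mathbb{G}(\R)/N$. Normality of $N$ gives $(KN)^n = K^n N$, so $f(\L^n) \subseteq q(K^n)$ is relatively compact for every $n$ — this is the first good-model condition for every power of $\L$. To verify the second condition, fix a compact symmetric neighbourhood $U$ of $e$ in $\mathbb{G}(\R)$ and a neighbourhood $V$ of the identity in $\mathbb{G}(\R)/N$ so small that $q^{-1}(V) \subset UN$. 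If $\gamma \in f^{-1}(V)$, write $\iota(\gamma) = ub = uk\iota(\lambda)$ with $u \in U$, $k \in K$, $\lambda \in \L$; then $\iota(\gamma\lambda^{-1}) \in UK$. Since $\iota(\L^\infty)$ is discrete, $\iota(\L^\infty) \cap UK$ is finite, and hence $F_0 := \iota^{-1}(\iota(\L^\infty) \cap UK)$ is a finite subset of $\L^\infty$ with $f^{-1}(V) \subset F_0 \L$. Choosing $m$ with $F_0 \subset \L^m$ (possible since $F_0$ is finite and $\L^\infty = \bigcup_n \L^n$) gives $f^{-1}(V) \subset \L^{m+1}$. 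Thus $f$ is a good model for the uniform approximate lattice $\L^{m+1}$, which is commensurable to $\L$; in particular $\L^{m+1}$ is a Meyer subset, and so is $\L$.

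The principal obstacle is the verification of the second good-model condition: Proposition \ref{Approximate subgroups in soluble algebraic Groups} only controls $\iota(\L)$ up to a compact perturbation of $N$, so $f^{-1}(V)$ is only contained in $F_0 \L$ rather than in $\L$ itself, and this forces us to prove the good-model condition on $\L^{m+1}$ instead of on $\L$. Threading Auslander's theorem so that $\iota$ is defined on all of $\L^\infty$ with Zariski-dense image in a Zariski-connected algebraic group is in comparison mostly bookkeeping, handled by successive commensurable replacements of $\L$ via Lemma \ref{Intersection commensurable approximate subgroups}.
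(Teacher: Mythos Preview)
Your proposal is correct and follows essentially the same route as the paper: Auslander's embedding, reduction to the Zariski-connected case via Lemma~\ref{Intersection commensurable approximate subgroups}, application of Proposition~\ref{Approximate subgroups in soluble algebraic Groups} to obtain the normal subgroup $N$, and use of the discreteness of $\L^{\infty}$ in $\mathbb{G}(\R)$ to upgrade compact commensurability to genuine commensurability. The only cosmetic difference is in the final packaging: the paper replaces $\L$ by $\L^{\infty}\cap p^{-1}(W_0)$, for which the projection $p$ is immediately a good model, whereas you pass to $\L^{m+1}$ instead.
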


\begin{proof} 
According to a theorem of Auslander (see \cite{10.2307/1970362} or the proof of \cite[Theorem 4.28]{raghunathan1972discrete}), $\L^{\infty}$ admits an embedding as a Zariski-dense lattice in $R$ the group of $\R$-points of a soluble algebraic group. In the following we will consider $\L^{\infty}$ as a subgroup of $R$. Moreover, we can assume without loss of generality that $R$ is Zariski-connected. Indeed, there is a finite index subgroup $\Gamma$ of $\L^{\infty}$ such that the Zariski closure of $\Gamma$ is Zariski-connected. Furthermore, the approximate subgroup $\L^2 \cap \Gamma$ is commensurable to $\L$ according to Lemma \ref{Intersection commensurable approximate subgroups}.

Now, according to Proposition \ref{Approximate subgroups in soluble algebraic Groups} there is a closed connected normal subgroup $N \triangleleft R$ such that $\L$ is compactly commensurable to $N$. Let $p: R \rightarrow R/N$ denote the natural projection. We know that $p(\L)$ is relatively compact, so we can choose a compact neighbourhood $W_0$ of $p(\L)$. Now, $\L$ is compactly commensurable to $\L^{\infty} \cap p^{-1}(W_0)$, so there is a compact subset $K \subset R$ such that 
$$ \L \subset K (\L^{\infty} \cap p^{-1}(W_0)) \text{ and } \L^{\infty} \cap p^{-1}(W_0) \subset K \L. $$
Therefore, 
$$ \L \subset (K\cap \L^{\infty}) (\L^{\infty} \cap p^{-1}(W_0)) \text{ and } \L^{\infty} \cap p^{-1}(W_0) \subset (K \cap \L^{\infty}) \L. $$
But $\L^{\infty}$ is a discrete subgroup in $R$ so $K\cap \L^{\infty}$ is finite and $\L$ is commensurable to $\L^{\infty} \cap p^{-1}(W_0)$. 

Finally, $\rest{p}{(\L^{\infty} \cap p^{-1}(W_0))^{\infty}}$ is a good model for $\L^{\infty} \cap p^{-1}(W_0)$ . Hence, $\L$ is a Meyer subset.  
\end{proof}

\begin{proposition}\label{Generated group is polycyclic}
 Let $\L \subset G$ be a uniform approximate lattice in a connected soluble Lie group. Then there is $\L'$ commensurable to $\L$ such that $(\L')^{\infty}$ is polycyclic. 
\end{proposition}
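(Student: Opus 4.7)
The plan is to proceed by induction on the derived length $d$ of $G$. For the base case $d=1$, when $G$ is connected abelian, Meyer's classical theorem for locally compact abelian groups produces a Meyer subset $\L'$ commensurable to $\L$, arising from a cut-and-project scheme $(G,H,\Gamma)$; then $(\L')^{\infty}$ is contained in the finitely generated abelian group $p_G(\Gamma)$ and is therefore polycyclic.

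For the induction step, let $N:=G^{(d-1)}$, a closed connected abelian normal subgroup of $G$. The maximal compact subgroup $T$ of $N$ is central in $G$ (a connected group must act trivially on the discrete group $\Aut(T)$), so after quotienting by $T$ I may assume $N\simeq\R^k$. One then verifies that $\L^2\cap N$ is a uniform approximate lattice in $N$: the approximate subgroup property follows from Lemma \ref{Intersection of approximate subgroups}, discreteness from $\L^2\cap N\subset\L^2$, and compact commensurability with $N$ from the compact commensurability of $\L$ with $G$. The base case then gives a Meyer subset $\L_0\subset \R^k$ commensurable to $\L^2\cap N$, with $\L_0^{\infty}$ finitely generated abelian.

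Conjugation by $\L$ acts linearly on $N\simeq\R^k$, furnishing a representation $\rho\colon G\to \GL(N)\simeq \GL_k(\R)$. Because conjugation by any $\lambda\in\L$ sends $\L^2\cap N$ into $\L^4\cap N$, which is commensurable to $\L^2\cap N$ by Lemma \ref{Intersection of approximate subgroups} and hence to $\L_0$, the approximate subgroup $\rho(\L)$ commensurates $\L_0$. Proposition \ref{Commensurator quasi-crystals} then furnishes $\Xi'\subset \rho(\L)^4$ commensurable to $\rho(\L)$ together with an injective morphism $(\Xi')^{\infty}\hookrightarrow \SL_m(\Z)$. Setting $\L':=\L^4\cap \rho^{-1}(\Xi')$, which is commensurable to $\L$ by Lemma \ref{Intersection commensurable approximate subgroups}, the image $\rho((\L')^{\infty})$ is a soluble subgroup of $\SL_m(\Z)$ and hence polycyclic by Mal'cev's theorem on soluble subgroups of $\GL_m(\Z)$.

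It remains to show that the kernel in
\[
1\longrightarrow (\L')^{\infty}\cap Z_G(N)\longrightarrow (\L')^{\infty}\longrightarrow \rho((\L')^{\infty})\longrightarrow 1
\]
is polycyclic. Since $N\subset Z(Z_G(N))$, the quotient $Z_G(N)^{0}/N$ is a connected soluble Lie group of derived length at most $d-1$, so after constructing a uniform approximate lattice in $Z_G(N)^{0}$ from $\L'$ (again via Lemmas \ref{Intersection commensurable approximate subgroups} and \ref{Intersection of approximate subgroups}) and projecting to $Z_G(N)^{0}/N$, the induction hypothesis applies. Combining this polycyclicity with the base case for the central piece $N$ yields polycyclicity of the kernel, and thence of $(\L')^{\infty}$. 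The principal obstacle is precisely ensuring that the successive intersections $\L^2\cap H$ (and subsequent commensurable modifications) with closed normal subgroups $H$ such as $N$ and $Z_G(N)^{0}$ are genuinely compactly commensurable with $H$; this is the technical heart of the argument and requires a careful compactness analysis exploiting that $\L$ is a uniform approximate lattice in $G$.
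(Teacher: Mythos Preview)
Your overall strategy is quite different from the paper's, and it contains a genuine gap that you yourself flag at the end but do not close.

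The paper does \emph{not} induct on the derived length and does \emph{not} need $\L^2\cap N$ to be a uniform approximate lattice in any normal subgroup. Instead it works once with the nilradical $N$ (a simply connected nilpotent group, hence the real points of a unipotent algebraic group) and invokes Proposition~\ref{compact generation} to conclude directly that $(\L^k\cap N)^{\infty}$ is finitely generated, with no cocompactness input whatsoever. From there it passes to the rational Lie algebra $\Q\log(\Xi^{\infty})$, chooses $\Xi$ of minimal rank among the $\L^k\cap N$ to force conjugation by $\L$ to stabilise this $\Q$-span, extracts genuine lattices $\Gamma_1\subset\Gamma_2$ in that $\Q$-vector space, and then applies Proposition~\ref{Approximate sugroup commensurating a lattice} (not Proposition~\ref{Commensurator quasi-crystals}, which the paper explicitly says is unused). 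A single Schur-type step then shows the quotient by $H\cap\tilde\L^{\infty}$ is virtually abelian, and polycyclicity follows in one stroke, with no recursion.

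Your approach, by contrast, needs $\L^2\cap N$ to be compactly commensurable to $N=G^{(d-1)}$ in order to feed it into Meyer's theorem and into Proposition~\ref{Commensurator quasi-crystals} (whose hypotheses require honest approximate lattices in $\R^k$), and then again needs $\L'^2\cap Z_G(N)^0$ to be a uniform approximate lattice in $Z_G(N)^0$ to run the induction. Neither claim is automatic: for an arbitrary closed connected normal subgroup $H$ of a soluble Lie group it is simply false that a uniform approximate lattice meets $H$ in something cocompact (already for $G=\R^2$ and $H$ an irrational line), and $Z_G(N)^0$ has no special structural role that would save it. Even for genuine lattices the available theorem (Mostow) concerns the nilradical, not the last derived term or centralisers. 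So ``a careful compactness analysis'' is not a missing detail; it is the whole difficulty, and the paper's route is organised precisely to avoid it.

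There is a second, smaller issue with the inductive bookkeeping: the hypothesis applied to $Z_G(N)^0/N$ only gives some commensurable $\L''$ with $(\L'')^{\infty}$ polycyclic, not that the kernel $(\L')^{\infty}\cap Z_G(N)$ itself is polycyclic. You would still need to pull this back and splice it with the image-side replacement to obtain a single $\L'$ that works simultaneously, and you have not indicated how.
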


\begin{proof}
 Let us first show that we can assume $G$ to be simply connected. Indeed, if $G$ is not simply connected we proceed as follows. Let $p:\tilde{G} \rightarrow G$ be a universal cover, then $p^{-1}(\L)$ is a uniform approximate lattice in $\tilde{G}$. Suppose $p^{-1}(\L)$ is commensurable to an approximate subgroup $\L'$ such that $\L'$ generates a polycyclic group. Then $p(\L')$ is commensurable to $\L$ and $p(\L')$ generates a polycyclic group as well.
 
 From now on $G$ is supposed simply connected. Let $N$ denote the nilradical of $G$, $k \in \N$ and $\Xi \subset \L^k \cap N$ be an approximate subgroup.

 First of all, let us show that $\Xi^{\infty}$ is finitely generated. Since $G$ is simply connected, $G$ does not contain any non-trivial compact subgroup. So $N$ does not contain any non-trivial compact subgroup, and thus $N$ is simply connected. Now, $N$ is a connected simply connected nilpotent Lie group so it is the group of $\R$-points of a unipotent algebraic group (see \cite[Theorem 4.1]{raghunathan1972discrete}) and $\Xi$ is a discrete approximate subgroup. Hence, $\Xi^{\infty}$ is finitely generated by Proposition \ref{compact generation}. 
 
 The proof will rely on the following lemma that links finitely generated subgroups of connected simply connected nilpotent Lie group to finite dimensional $\Q$ Lie algebras.
 
   \begin{lemma}{\cite[Chapter IV]{raghunathan1972discrete}}\label{lemma Raghunathan}
   Let $\Gamma \subset N$ be a finitely generated group in a connected simply connected nilpotent Lie group. Then $\Gamma$ is torsion-free nilpotent, $\Q\log(\Gamma)$ is a finite dimensional $\Q$ Lie algebra and $\dim_{\Q}(\Q\log(\Gamma)) = \rank(\Gamma)$.  
  \end{lemma}
  Where the rank of $\Gamma$ is the dimension of its Malcev completion, i.e. the unique connected simply connected nilpotent Lie group that admits a lattice isomorphic to $\Gamma$. Lemma \ref{lemma Raghunathan} is a consequence of \cite[Theorems 2.18, 2.12, 2.10 and 2.11]{raghunathan1972discrete}. 
   
   \medbreak
   
 Now, $\Xi^{\infty}$ is a finitely generated torsion-free nilpotent group so it has finite rank.  Among all approximate subgroups $\Xi$ commensurable to $\L^2 \cap N$ such that there is $k \in \N$ satisfying $\Xi \subset \L^k \cap N$, choose one with minimal rank. Let $\Xi$ denote this approximate subgroup and let $k$ be such that $\Xi \subset \L^k \cap N$. 
 
 Now, for $\lambda \in \L$, $\Xi$ and $\lambda\Xi\lambda^{-1}$ are contained in and commensurable to $\L^{k+2} \cap N$,  so $\Xi^2 \cap \lambda\Xi^2\lambda^{-1}$ is commensurable to $\Xi$. But $\left(\Xi^2 \cap \lambda\Xi^2\lambda^{-1}\right)^{\infty} \subset \Xi^{\infty}$ so they have the same rank. As a consequence, it is a finite index subgroup, so the groups $\Xi^{\infty}$ and $\lambda\Xi^{\infty}\lambda^{-1}$ are commensurable.
 
 So there is $n$ such that for all $\gamma \in \Xi^{\infty}$ , we have $\gamma^n \in \Xi^{\infty} \cap \lambda\Xi^{\infty}\lambda^{-1}$. Therefore, $$n\log(\Xi^{\infty}) \subset \log(\lambda\Xi^{\infty}\lambda^{-1}).$$
 Hence, $$\Q\log(\Xi^{\infty}) = \Q \log(\lambda\Xi^{\infty}\lambda^{-1}),$$
 where $\log$ denotes the logarithm map from $N$ to its Lie algebra.

 Now, $\exp(\Q\log(\Xi^{\infty}))$ is stable under conjugation by elements of $\L^{\infty}$. Moreover, $\exp(\Q\log(\Xi^{\infty}))$ is a group and any finitely generated subgroup in it has rank less than or equal to $\dim_{\Q}(\Q\log(\Xi^{\infty})) = \rank(\Xi^{\infty})$ according to Lemma \ref{lemma Raghunathan}..
  
  Let $\Gamma$ denote the subgroup generated by $\L^{k+2}\cap \exp(\Q\log(\Xi^{\infty}))$. Since $(\L^{k+2} \cap N)^{\infty}$ is finitely generated according to Proposition \ref{compact generation}, $\Gamma$ is finitely generated as well. In addition, it contains $\Xi^{\infty}$. Therefore, $\rank(\Gamma)= \rank(\Xi^{\infty})$ and $\Xi^{\infty}$ has finite index in $\Gamma$. 
  
  Now, there are free abelian groups $\Gamma_1,\Gamma_2 \subset \Q\log(\Xi^{\infty})$ of rank $\dim_{\Q}(\Q\log(\Xi^{\infty}))$ such that 
  $$\Gamma_1 \subset \log(\Xi^{\infty}) \subset \log(\Gamma) \subset \Gamma_2, $$ 
  see \cite[Theorem 2.12]{raghunathan1972discrete}. Moreover, for all $\lambda \in \L$, $\Ad(\lambda)(\Gamma_1) \subset \Gamma_2$ since $\lambda\Xi^{\infty}\lambda^{-1} \subset \Gamma$. According to Proposition \ref{Approximate sugroup commensurating a lattice}, there is $\tilde{\L}\subset \L^4$ commensurable to $\L$ such that $\Ad(\lambda)(\Gamma_1) = \Gamma_1$ for all $\lambda\in \tilde{\L}$. Therefore, the subgroup $H$ of $\Xi^{\infty}$ generated by $\exp(\Gamma_1)$  has finite index in $\Xi^{\infty}$ and $H \cap \tilde{\L}^{\infty}$ is normalised by $\tilde{\L}$. 
  
  Consider $p: \tilde{\L}^{\infty} \rightarrow \tilde{\L}^{\infty}/(H \cap \tilde{\L}^{\infty})$ the canonical projection. Since $\Xi$ is commensurable to $\Xi^2 \cap H$, $\L^{16} \cap N$ is commensurable to $\Xi$ and $\tilde{\L}\subset \L^4$, the commutators of elements of $p(\tilde{\L})$ form a finite set. So $p(\tilde{\L}^2) \cap Z(\tilde{\L}^{\infty}/(H \cap \tilde{\L}^{\infty}))$ is commensurable to $p(\tilde{\L})$. 
  
  Indeed, $p(\tilde{\L}^{\infty})$ is finitely generated so let $\mathcal{F}$ be a generating family $\left\{\gamma_1,\ldots,\gamma_n\right\} \subset p(\tilde{\L})$. Define the map 
  
  \begin{align*}
  \theta_{\mathcal{F}}:  p(\tilde{\L}^{\infty}) &\rightarrow p(\tilde{\L}^{\infty}) \\
			 \gamma & \mapsto ([\gamma_1,\gamma],\ldots,[\gamma_n,\gamma]),
  \end{align*}
  where $[\gamma_i,\gamma]:=\gamma_i\gamma\gamma_i^{-1}\gamma^{-1}$. We can check that for any $\gamma$ in $ \theta_{\mathcal{F}}(p(\tilde{\L}))$ there is $\gamma' \in p(\tilde{\L})$ such that 
  $$ \theta_{\mathcal{F}}^{-1}(\{\gamma\}) = \gamma'Z(\tilde{\L}^{\infty}/(H \cap \tilde{\L}^{\infty})) $$
  where $Z(\tilde{\L}^{\infty}/(H \cap \tilde{\L}^{\infty}))$ denotes the centre of $\tilde{\L}^{\infty}/(H \cap \tilde{\L}^{\infty})$. But $\theta_{\mathcal{F}}(p(\tilde{\L}))$ is finite so there are $\gamma'_1,\ldots,\gamma'_r \in \tilde{\L}^{\infty}/(H \cap \tilde{\L}^{\infty})$ such that $p(\tilde{\L}) \subset \bigcup \gamma'_i Z(\tilde{\L}^{\infty}/(H \cap \tilde{\L}^{\infty}))$. So according to Lemma \ref{Intersection commensurable approximate subgroups} , $p(\tilde{\L})$ is commensurable to $p(\tilde{\L}^2) \cap Z(\tilde{\L}^{\infty}/(H \cap \tilde{\L}^{\infty}))$. Thus, by Lemma \ref{Intersection commensurable approximate subgroups} once again, $\L':=\tilde{\L}^2 \cap p^{-1}(Z(\tilde{\L}^{\infty}/ (H \cap \tilde{\L}^{\infty})))$ is commensurable to $\tilde{\L}$ and $\L$.
  
  Finally, $\L'$ is a uniform approximate lattice in $R$ as it is commensurable to $\L$. Moreover, $ H \cap \tilde{\L}^{\infty} \subset \L'^{\infty}$ is a finitely generated torsion-free nilpotent normal subgroup such that $\L'^{\infty}/(H \cap \tilde{\L}^{\infty})$ is abelian and finitely generated. Hence, $\L'^{\infty}$ is polycyclic. 
 \medbreak

\end{proof}

\begin{proof}[Proof of Theorem \ref{Meyer's theorem soluble}.]
Let $\L \subset G$ be a uniform approximate lattice in a connected soluble Lie group. According to Proposition \ref{Generated group is polycyclic} $\L$ is commensurable to an approximate subgroup $\L'$ with $\L'$ polycyclic. Now, by Proposition \ref{polycyclic implies Meyer} the approximate subgroup $\L'$ is a Meyer set, so $\L$ is a Meyer set as well.
 
\end{proof}

\section{Acknowledgements}

I am deeply grateful to my supervisor, Emmanuel Breuillard, for his patient guidance and encouragements. 

%
%

\end{document}